\newtheorem{thm}{Theorem}[section]
\newtheorem{lem}[thm]{Lemma}
\newtheorem{quest}{Question}[section]
\theoremstyle{definition}
\newtheorem{defn}{Definition}
\newenvironment{rem}{ \noindent\textbf{Remark.}\itshape}{\par}
\numberwithin{equation}{section}
\title{A few observations around Gaussian domination and continuous symmetry breaking for spin O(N) model}
\author{Xiao Han}
\affil{EPFL}
\date{}
\begin{document}
\maketitle
\begin{abstract}
We investigate the notion of Gaussian domination for the spin $O(N)$ model on general finite graphs. {We begin by proving a general inequality for spin correlations under the assumption of Gaussian domination, which directly implies long-range order at low temperatures for graphs with bounded Green's function.} Usually, Gaussian domination is proved via reflection positivity, but this requires strict symmetries and is very rigid. In this article we {also} probe the boundaries of elementary methods for proving Gaussian domination. Although we did not find a way to get uniform bounds, we do offer new views for Gaussian domination at low and high temperatures for finite graphs, and a few counterexamples illustrating the interplay between correlation estimates and Gaussian domination and how local changes in the graph structure can affect the presence of Gaussian domination.  

\end{abstract}

\section{Introduction}
\label{sec1}
We aim to study the spin $O(N)$ model, a classical statistical mechanics model defined on a graph or lattice, in which each vertex is assigned a spin taking values on the unit sphere $\mathbb{S}^{N-1} \subset \mathbb{R}^N$ \cite{3,4,5,12,13,14}. In other words, each spin is a unit vector in $\mathbb{R}^N$. More precisely, we define the spin \( O(N) \) model on a finite, undirected, unweighted, connected graph $G=(V, E)$. For convenience, we label the vertices as \( V = \{1, 2, \dots, n\} \) with \( n \in \mathbb{N}^* \).
\begin{defn}[\textbf{Spin $O(N)$ model}]
The spin $O(N)$ model on a finite, undirected, unweighted, connected graph $G=(V, E)$ at inverse temperature $\beta \in [0, \infty)$ is a random configuration $\sigma = (\sigma_1, \dots, \sigma_n) \in (\mathbb{S}^{N-1})^{V}$ with probability measure 
\begin{equation}
\label{def}
d\mu_{G,N,\beta}(\sigma)=\frac{1}{Z_{G, N,\beta}}\exp({-\beta H_{G, N}(\sigma)})d\sigma,
\end{equation}
where the Hamiltonian
\begin{equation}
 H_{G,N}(\sigma):=-\sum\limits_{\{i,j\}\in E}\sigma_i \cdot \sigma_j,
\end{equation}
the partition function 
\begin{equation}
Z_{G, N,\beta}:=\int_{(\mathbb{S}^{N-1})^{V}}\exp({-\beta H_{G, N}(\sigma)})d\sigma,
\end{equation}
and $d\sigma = d\sigma_1 \dots d\sigma_n$ denotes the product measure generated by the Lebesgue measure on the sphere. We use \( \mathbb{E}_{\mu_{G,N,\beta}} \) to denote the expectation with respect to the probability measure \( \mu_{G,N,\beta} \).
\end{defn}

In dimensions $d > 1$ the nearest-neighbour spin $O(N)$ model exhibits a phase transition. For the spin $O(N)$ model, it is well-known that at high temperatures, or when the underlying lattice is one-dimensional ($d = 1$), the correlations of spins decay exponentially with distance \cite{15,16,17}.

The low-temperature regime is however much trickier to understand. The case $N = 1$ corresponds to the classical Ising model, which has been extensively investigated. In this setting, it is known that there is long range order at low temperatures, i.e. the persistance of constant order correlation over distant spins, if and only if the spatial dimension satisfies $d \geq 2$. 

For models with continuous symmetry ($N \geq 2$), the Mermin--Wagner theorem establishes that no spontaneous symmetry breaking can occur in two-dimensional systems \cite{10,11}. On the other hand, there is a Berezinskii–Kosterlitz–Thouless transition in two-dimensional systems when $N=2$, where the model exhibit power-law decay of correlations at low temperatures \cite{19,20,21}. For $N>2$, the existence of a BKT transition in two dimensions is still unknown. When $d \geq 3$, Fröhlich, Simon, and Spencer showed that a symmetry-breaking phase transition does occur for the spin $O(N)$ model on high-dimensional tori \cite{1,2}. Their approach is based on the powerful method of {reflection positivity}, which relies on the presence of strong symmetry in the underlying graph (e.g., periodic boundary conditions on a torus). A key component of their method is the concept of {Gaussian domination} (see Definition \ref{GD} for details), which can be used to establish long-range order in the system.

However, the applicability of reflection positivity is largely restricted to settings with high symmetry, such as lattices with periodic boundary conditions. Thus, in this work, in order to gain a better understanding to low-temperature regime of $O(N)$ spin models and similar systems, we revisit the key notion of Gaussian domination using elementary means on general finite graphs. Our main contribution is presenting several counterexamples illustrating different ways in which these phenomena may fail to extend to such general settings, but on the way we also obtain a few general results relating Gaussian domination and spin-spin correlations.

\textbf{Acknowledgements}
 {The author would like to thank Juhan Aru and Sébastien Ott for their valuable feedback and comments. This work is supported by the Eccellenza grant 194648 of the Swiss National Science Foundation. The author is also a member of NCCR SwissMap.}

\section{Main results}
\label{sec2}
Let us first introduce the notion of Gaussian domination in the context of a general graph. Gaussian domination refers to an inequality that involves a modified version of the partition function.

\begin{defn}[\textbf{Modified partition function}] 
We define the modified partition function of spin $O(N)$ model with a shift $h = (h_1, \dots, h_n) \in (\mathbb{R}^N)^V$ by
\begin{equation}
Z^*_{G, N, \beta}(h) := \int_{(\mathbb{S}^{N-1})^V}\exp(-\frac{\beta}{2}\sum_{\{i, j\}\in E}\lVert\sigma_i+h_i-\sigma_j-h_j\rVert^2) d\sigma.
\end{equation}
\end{defn}
Note that $-\frac{1}{2}\lVert\sigma_i-\sigma_j \rVert^2=-1+\sigma_i \cdot \sigma_j$, thus $Z^*_{G, N, \beta}(0)$ is a constant multiple of $Z_{G, N,\beta}$.

\begin{defn}[\textbf{Gaussian domination}]
\label{GD}
Given $N\geq1$, a connected graph $G=(V, E)$, and $\beta \in [0, +\infty)$, we say that there is a (local) {Gaussian domination} in the spin $O(N)$ model on $G$ at inverse temperature $\beta$ if  $Z^*_{G, N, \beta}(h)$ has a local maximum point at $h=0$.  
\end{defn}
Note that by symmetry we have that $h=0$ is a stationary point of $Z^*_{G, N, \beta}(h)$, thus the Gaussian domination implies that its second-order partial derivatives in all directions are non-positive.

Now, given Gaussian domination, long-range order is stated and proved in a global averaged sense. In particular, it is shown that \cite{1,3}: for $d\geq 3$, $N\geq 1$ and  $G=\mathbb{T}^d_L$ (torus of side length $L$), there exists $\beta(d,N)>0$ and $C(d,N)>0$ such that for any $\beta>\beta(d,N)$ we have
$$
\frac{1}{|V(\mathbb{T}^d_L)|^2}\sum_{x,y\in V(\mathbb{T}^d_L)}\mathbb{E}_{\mu_{G,N,\beta}} \sigma_x \cdot \sigma_y>C(d,N).
$$

Our first observation is that Gaussian domination actually allows to directly conclude a bound on spin to spin correlations. To state this, let $\{X_k\}, k\geq 0$ be a simple random walk on $G$ and $\mathbb{P}_i$ and $\mathbb{E}_i$ be the probability law and the associated expectation for $X_k$ starting from $i\in V$. Further suppose $H_j:= \inf\{k\geq0 : X_k= j\}$ is the first hitting time of $j\in V$ by $X_k$ and $d(i)$ is the degree of $i\in V$. Finally, for any $i,j \in V$, $i\neq j$, $u_{ij}: V \to \mathbb{R}$ is defined by 
\begin{equation}
\label{uu}
u_{ij}(s):= \mathbb{E}_s\sum\limits_{k=0}^{H_j-1}1_{X_k=i}.
\end{equation}
If we view $j\in V$ as an absorbing state, this is exactly the Green's function on $G$. 
\begin{thm}
\label{GDE}
Given $N\geq1$, a connected graph $G=(V, E)$, and $\beta \in [0, +\infty)$, if there is a Gaussian domination in the spin $O(N)$ model on  $G$ at inverse temperature $\beta$, we have that $\mathbb{E}_{\mu_{G,N,\beta}} \sigma_x \cdot \sigma_y \geq 1-\frac{Nu_{xy}(x)}{2\beta d(x)}$ for any $x,y \in V$.
\end{thm}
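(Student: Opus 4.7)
The plan is to use Gaussian domination as a second-order constraint on a one-parameter family of shifts tailored to the pair $(x,y)$. I would fix a unit vector $v \in \mathbb{R}^N$ and a scalar test function $f: V \to \mathbb{R}$, set $h_i = tvf(i)$, and study $\phi(t) := \log Z^*_{G,N,\beta}(tvf)$. After expanding $\lVert \sigma_i + h_i - \sigma_j - h_j \rVert^2$ and factoring out the $t$-independent exponential, one finds $\phi(t) - \phi(0) = \log \mathbb{E}_{\mu_{G,N,\beta}}\!\left[\exp\!\left(tA - \tfrac{t^2}{2}B\right)\right]$, where
\begin{equation*}
A := -\beta \sum_{\{i,j\}\in E}(f(i)-f(j))(\sigma_i-\sigma_j)\cdot v, \qquad B := \beta \sum_{\{i,j\}\in E}(f(i)-f(j))^2.
\end{equation*}

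The derivative computation then is immediate: the spin-flip symmetry $\sigma \mapsto -\sigma$ of $\mu_{G,N,\beta}$ kills $\mathbb{E}_{\mu_{G,N,\beta}}[A]$, so $\phi'(0)=0$ (consistent with the defining stationarity of $h=0$), and $\phi''(0) = \mathbb{E}_{\mu_{G,N,\beta}}[A^2] - B$. Gaussian domination thus enforces $\mathbb{E}_{\mu_{G,N,\beta}}[A^2] \leq B$. Writing $s_i := \sigma_i \cdot v$, I would then use the discrete Green identity $\sum_{\{i,j\}\in E}(f(i)-f(j))(g(i)-g(j)) = \langle f, \Delta g\rangle$, with $\langle f, g\rangle := \sum_i f(i) g(i)$ and $\Delta$ the graph Laplacian, to rewrite the bound as
\begin{equation*}
\beta\, \mathbb{E}_{\mu_{G,N,\beta}}\bigl[\langle \Delta f, s\rangle^2\bigr] \leq \langle f, \Delta f\rangle
\end{equation*}
for every $f:V\to\mathbb{R}$.

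To localize at $(x,y)$, I would choose $f$ solving $\Delta f = \delta_x - \delta_y$, which is possible since $\delta_x - \delta_y$ is orthogonal to constants. Then $\langle \Delta f, s\rangle = s_x - s_y$ and $\langle f, \Delta f\rangle = f(x)-f(y)$, so $\beta\,\mathbb{E}_{\mu_{G,N,\beta}}[(s_x-s_y)^2] \leq f(x)-f(y)$. The $O(N)$-invariance of $\mu_{G,N,\beta}$ yields $\mathbb{E}_{\mu_{G,N,\beta}}[(s_x-s_y)^2] = \tfrac{1}{N}\mathbb{E}_{\mu_{G,N,\beta}}[\lVert \sigma_x-\sigma_y\rVert^2]$, and from $\lVert \sigma_x\rVert=\lVert\sigma_y\rVert=1$ we have $\mathbb{E}_{\mu_{G,N,\beta}}[\sigma_x\cdot\sigma_y] = 1-\tfrac{1}{2}\mathbb{E}_{\mu_{G,N,\beta}}[\lVert\sigma_x-\sigma_y\rVert^2]$, so the whole estimate reduces to the identity $f(x)-f(y) = u_{xy}(x)/d(x)$.

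The principal (minor) obstacle is this last identification, which is the classical content of the electrical-network interpretation of the random walk. I would verify it probabilistically: applying the Markov property at the first step to the definition \eqref{uu} shows that $u_{xy}/d(x)$ is harmonic on $V\setminus\{x,y\}$, vanishes at $y$, and has Laplacian equal to $1$ at $x$; since $\Delta$ applied to any function has zero total sum, this forces $\Delta(u_{xy}/d(x)) = \delta_x - \delta_y$. By uniqueness on the orthogonal complement of constants (pinned by $f(y)=0$), one has $f = u_{xy}/d(x)$, hence $f(x)-f(y) = u_{xy}(x)/d(x)$, closing the argument.
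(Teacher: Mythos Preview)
Your proposal is correct and follows essentially the same route as the paper's proof: both choose the test shift $h_i = t f(i)\,e$ with $f = u_{xy}/d(x)$ (the paper's Lemma~\ref{deltalem} is exactly your verification that $\Delta(u_{xy}/d(x)) = \delta_x - \delta_y$), compute the second derivative of the (log of the) modified partition function along this direction, and invoke Gaussian domination to bound $\mathbb{E}_{\mu_{G,N,\beta}}[(e\cdot(\sigma_x-\sigma_y))^2]$. The only cosmetic differences are that the paper differentiates $Z^*$ rather than $\log Z^*$ (equivalent here since $Z_v'(0)=0$) and sums the resulting one-component bound over an orthonormal basis of $\mathbb{R}^N$ rather than invoking $O(N)$-invariance.
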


 {When $d \ge 3$, we consider $G$ to be the torus $\mathbb{T}^d_{L}$, this theorem implies the uniform lower bound for the two-point function: there exist $c=c(d)>0$ and $\beta_c=\beta_c(d)>0$ such that for any $L >0$,  $\beta > \beta_c$ and $x, y \in G$ we have that $\mathbb{E}_{\mu_{G,N,\beta}} \sigma_x \cdot \sigma_y \geq c$. This was recently also obtained in \cite{23} for pairs of points that are not too distant, with a different lower bound.}

For Gaussian domination there are two parameters at play: the graph and the inverse temperature. A natural question is: under which conditions and at which temperatures does Gaussian domination hold? Further, how robust is it w.r.t. changing the conditions? 

First, for any fixed finite graph, Gaussian domination can be rigorously established in both the high-temperature and low-temperature regimes using elementary means. This might not be very surprising, but we were not able to pinpoint mathematical proofs in the literature.

\begin{thm}
\label{high}
Given $N\geq1$ and a connected graph $G=(V, E)$, there exists $\beta_s>0$, such that there is a Gaussian domination in the spin $O(N)$ model for $G$ and $0<\beta <\beta_s$.
\end{thm}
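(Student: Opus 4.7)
The plan is to expand $Z^*_{G,N,\beta}(h)$ around $h = 0$, verify that $h = 0$ is a stationary point by symmetry, and then show that the Hessian of $\log Z^*_{G,N,\beta}$ at $h = 0$ is negative semi-definite for $\beta$ sufficiently small, with the only null direction being the constant translations $h \mapsto h + c\mathbf{1}$ under which $Z^*_{G,N,\beta}$ is trivially invariant.

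Expanding $\|\sigma_i + h_i - \sigma_j - h_j\|^2 = \|\sigma_i - \sigma_j\|^2 + 2(\sigma_i - \sigma_j)\cdot(h_i - h_j) + \|h_i - h_j\|^2$ and separating the terms that depend on $h$ yields the factorisation
\begin{equation*}
Z^*_{G,N,\beta}(h) = C \cdot \exp(f(h)) \cdot \mathbb{E}_{\mu_{G,N,\beta}}[\exp(G(\sigma,h))],
\end{equation*}
where $C>0$ is independent of $h$, $f(h) := -\frac{\beta}{2}\sum_{\{i,j\}\in E}\|h_i - h_j\|^2$ is a deterministic quadratic form with non-positive Hessian, and $G(\sigma, h) := -\beta \sum_{\{i,j\}\in E}(\sigma_i - \sigma_j)\cdot(h_i - h_j)$ is linear in $h$. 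The global $O(N)$-invariance of $\mu_{G,N,\beta}$ gives $\mathbb{E}_{\mu_{G,N,\beta}}[\sigma_i] = 0$, which together with the change of variable $\sigma \mapsto -\sigma$ shows $h = 0$ is a critical point.

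For a direction $v \in (\mathbb{R}^N)^V$, since $G$ is linear in $h$, the standard log-moment-generating-function computation gives
\begin{equation*}
\partial_t^2\Big|_{t=0}\log Z^*_{G,N,\beta}(tv) = -\beta \sum_{\{i,j\}\in E}\|v_i - v_j\|^2 + \mathrm{Var}_{\mu_{G,N,\beta}}(G_v),
\end{equation*}
with $G_v(\sigma) := -\beta \sum_{\{i,j\}\in E}(\sigma_i - \sigma_j)\cdot(v_i - v_j)$. Using $\|\sigma_i\|=1$ gives the deterministic bound $|G_v| \leq 2\beta\sum_{\{i,j\}\in E}\|v_i - v_j\|$, and Cauchy--Schwarz yields $\mathrm{Var}_{\mu_{G,N,\beta}}(G_v) \leq 4\beta^2|E|\sum_{\{i,j\}\in E}\|v_i - v_j\|^2$. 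Combining,
\begin{equation*}
\partial_t^2\Big|_{t=0}\log Z^*_{G,N,\beta}(tv) \leq (4\beta^2|E| - \beta)\sum_{\{i,j\}\in E}\|v_i - v_j\|^2,
\end{equation*}
which is non-positive as soon as $\beta < \beta_s := 1/(4|E|)$.

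The only subtle point is that the right-hand side vanishes whenever $v_i = v_j$ for every edge, i.e., when $v$ is constant across vertices (using connectedness of $G$); but $Z^*_{G,N,\beta}$ is invariant under adding a constant vector to all $h_i$, so this null direction is harmless. On the orthogonal complement of the constants, the graph Laplacian spectral gap makes the Hessian strictly negative definite, and a second-order Taylor expansion then gives $Z^*_{G,N,\beta}(h) \leq Z^*_{G,N,\beta}(0)$ in a neighbourhood of $0$, i.e., Gaussian domination. The analytic estimates are entirely elementary; the bound $\beta_s \sim 1/|E|$ is very crude and graph-dependent, which is consistent with the paper's message that such elementary approaches do not seem to give thresholds uniform in the graph.
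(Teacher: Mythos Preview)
Your proof is correct and follows essentially the same route as the paper: both compute the second directional derivative of $Z^*_{G,N,\beta}$ at $0$ (your passage to $\log Z^*$ is cosmetic since the first derivative vanishes, so $(\log Z_v)''(0)=Z_v''(0)/Z_v(0)$ and $\mathrm{Var}(G_v)=\mathbb{E}[G_v^2]$), and both control the positive term by the same pointwise Cauchy--Schwarz bound $[\sum(v_i-v_j)\cdot(\sigma_i-\sigma_j)]^2\le 4|E|\sum\|v_i-v_j\|^2$. Your threshold $\beta_s=1/(4|E|)$ is the natural one from this estimate; the paper records the slightly more conservative $\beta_s=1/(8|E|)$.
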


\begin{thm}
\label{low}
Given $N\geq 1$ and a connected graph $G=(V, E)$, there exists $\beta_d>0$, such that there is a Gaussian domination in the spin $O(N)$ model for $G$ and $\beta>\beta_d$.
\end{thm}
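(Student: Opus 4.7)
To prove $h = 0$ is a local maximum of $Z^*_{G,N,\beta}$, the plan is to compute the Hessian of $F(h) := \log Z^*_{G,N,\beta}(h)$ at $h = 0$ and show that, for $\beta$ large, it is strictly negative definite transverse to the kernel of the graph Laplacian. Since $F$ depends on $h$ only through the differences $h_i - h_j$, it is invariant under the uniform shifts $h_i \equiv c \in \mathbb{R}^N$ (which form this kernel), so strict negative definiteness on the orthogonal complement of these trivial directions yields local maximality at $h = 0$.

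Expanding $-\tfrac{\beta}{2}\|\sigma_i+h_i-\sigma_j-h_j\|^2$ and summing over edges,
\[F(h) - F(0) = K(h) - \tfrac{\beta}{2}\langle h, L_G h\rangle, \qquad K(h) := \log\mathbb{E}_{\mu_{G,N,\beta}}\bigl[e^{-\beta\langle L_G h,\,\sigma\rangle}\bigr],\]
where $L_G$ acts diagonally on each of the $N$ coordinates of $h \in (\mathbb{R}^N)^V$. The global $O(N)$ symmetry gives $\mathbb{E}_\mu[\sigma_i] = 0$, whence $\nabla K(0) = 0$ and $\mathrm{Hess}\,F(0) = -\beta L_G + \beta^2 L_G\, C\, L_G$ with $C := \mathrm{Cov}_\mu(\sigma)$. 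The same symmetry forces $C_{(i,\alpha),(j,\gamma)} = \delta_{\alpha\gamma}\, M_{ij}/N$ where $M_{ij} := \mathbb{E}_\mu[\sigma_i\cdot\sigma_j]$; setting $g = L_G u \in \mathbf{1}^\perp\subset \mathbb{R}^V$ per coordinate, the required negative definiteness reduces to the scalar condition
\[\tfrac{\beta}{N}\,\mathbb{E}_\mu\bigl\|\textstyle\sum_i g_i\sigma_i\bigr\|^2 < \langle g, L_G^+ g\rangle \quad\text{for all } g\in \mathbf{1}^\perp\setminus\{0\}.\]
Since $\sum_i g_i = 0$, the vector $\sum_i g_i\sigma_i$ vanishes on any aligned configuration, so the left-hand side tends to $0$ as $\beta \to \infty$. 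To extract the sharp $O(1/\beta)$ rate I will run a Laplace/spin-wave expansion around the $O(N)$-orbit of aligned ground states: parametrize $\sigma_i = S + \xi_i$ with $\xi_i$ tangent to $S^{N-1}$ at $S$, rescale $\phi_i := \sqrt{\beta}\,\xi_i$, and expand the Hamiltonian so that to leading order $\phi$ is a Gaussian on $(T_S S^{N-1})^V/T_SS^{N-1}$ with precision $L_G\otimes I_{N-1}$. This yields
\[\mathbb{E}_\mu\bigl\|\textstyle\sum_i g_i\sigma_i\bigr\|^2 = \tfrac{N-1}{\beta}\langle g, L_G^+ g\rangle\,\bigl(1 + O(\beta^{-1})\bigr),\]
uniformly in $g\in \mathbf{1}^\perp$, so $\tfrac{\beta}{N}\mathbb{E}\|\sum g_i\sigma_i\|^2 \to \tfrac{N-1}{N}\langle g, L_G^+ g\rangle < \langle g, L_G^+ g\rangle$. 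For $\beta$ large the strict Hessian bound then dominates the $O(\|h\|^3)$ Taylor remainder on a neighborhood of $\ker L_G$, producing the desired local maximum.

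The main obstacle is making the spin-wave expansion rigorous and uniform in $g$. Since $G$ is finite, this is in principle a finite-dimensional Laplace method, but it requires carefully splitting the $\sigma$-integration into a tube around the ground-state $O(N)$-orbit (where the Gaussian change-of-variables and its Jacobian are controlled to the relevant order) and a tail region (dispatched by an elementary energy-gap estimate of the form $\|\sigma_i - \sigma_j\|\geq \epsilon$ forces $H \geq -|E|+\epsilon'$). For $N = 1$ (Ising) the continuous spin-wave is replaced by exponential decay of $1 - M_{ij}$ in $\beta$ on a finite graph, and for $N\geq 3$ one could alternatively try a Bakry--\'Emery Poincar\'e inequality with constant $\gtrsim \beta$ arising from the positive Ricci curvature of $S^{N-1}$, though sharpening the constants to beat $\lambda_{\max}(L_G)$ looks delicate.
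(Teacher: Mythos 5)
Your strategy matches the paper's almost exactly: both reduce Gaussian domination to strict negativity of the Hessian of $Z^*_{G,N,\beta}$ (equivalently of $\log Z^*_{G,N,\beta}$) at $h=0$, transverse to the uniform-shift directions, and both obtain this as $\beta\to\infty$ by identifying the rescaled transverse spin fluctuations with an $(N-1)$-component GFF, so that the quadratic form $\beta^2\langle L_G h,\,\mathrm{Cov}_\mu(\sigma)\,L_G h\rangle$ converges to $\tfrac{N-1}{N}\beta\langle h,L_G h\rangle$, which is strictly below the diagonal term $\beta\langle h,L_G h\rangle$. Your packaging via $F=\log Z^*$, the graph Laplacian $L_G$ acting diagonally on the $N$ components, and the pseudo-inverse $L_G^+$ is a cleaner rendering of what the paper does coordinate-by-coordinate through $Z_v''(0)$ and the Green's-function identity in Lemma~\ref{deltalem}; the substance is the same.

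The one substantive point is that what you flag as the ``main obstacle'' --- making the spin-wave/Laplace expansion rigorous --- is precisely what Theorem~\ref{Wcon1}, Theorem~\ref{mom} and Remark~\ref{rootrem} supply: rooting $\sigma_1$ at the north pole, projecting the rescaled sphere onto the tangent space there, controlling the Radon--Nikodym derivative, disposing of the tail by a connectivity/energy-gap bound, and treating $N=1$ (Ising) by a separate elementary estimate, exactly as you anticipate. This yields $\beta\,\mathbb{E}_\mu[\Delta\sigma_x^l\,\Delta\sigma_y^l]\to\tfrac{N-1}{N}\bigl(d(x)\,1_{x=y}-1_{\{x,y\}\in E}\bigr)$ for all $x,y,l$. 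The uniformity in $g$ (your concern) is then automatic: after normalizing the direction ($v_1=0$, $\sum_x\|v_x\|^2=1$) the set of directions is compact and the limit concerns finitely many scalar expectations, so a single $\beta_d$ works; the paper closes the argument with the elementary bound $\sum_{\{i,j\}\in E}\|v_i-v_j\|^2\ge n^{-3}$ and a sufficiently small $\epsilon$. Two minor points: only convergence, not the $O(\beta^{-1})$ rate you assert, is established or needed; and the Bakry--\'Emery alternative you mention is not pursued and, as you note, would require constants sharper than what curvature alone gives.
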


Conversely, we show that even rather modest local graph modifications can break the domination.

\begin{thm}
\label{addstar}
For any $N\geq 1$ and $\beta>0$, there exists $n_0\in \mathbb{N}$, such that for any graph $G=(V, E)$, we can find another graph $G'=(V', E')$, such that $G \subset G'$, $|V'|-|V| \leq n_0$, $|E'|- |E| \leq n_0$, and there is no Gaussian domination in the spin $O(N)$ model for $G'$ and $\beta$.
\end{thm}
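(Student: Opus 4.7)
The plan is to attach a ``pendant star'' to $G$: fix any vertex $w\in V$ and set $V'=V\cup\{v_1,\dots,v_m\}$, $E'=E\cup\{\{w,v_i\}:1\leq i\leq m\}$, for an integer $m=m(\beta,N)$ to be chosen below. Then $G'=(V',E')$ is connected, $G\subset G'$, and $|V'|-|V|=|E'|-|E|=m$, so it will suffice to show that for $m$ large enough (depending only on $\beta$ and $N$) Gaussian domination fails for $G'$ at inverse temperature $\beta$.

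To test the local maximum condition at $h=0$, expand $\|\sigma_i+te_i-\sigma_j-te_j\|^2$ in powers of $t$ and pull the $\sigma$-independent piece out of the integral:
\begin{equation*}
\log Z^*_{G',N,\beta}(te)=\mathrm{const}-\tfrac{\beta t^2}{2}\sum_{\{i,j\}\in E'}\|e_i-e_j\|^2+\log\mathbb{E}_{\mu_{G',N,\beta}}\bigl[\exp(-\beta t\,W(e))\bigr],
\end{equation*}
with $W(e):=\sum_{\{i,j\}\in E'}(\sigma_i-\sigma_j)\cdot(e_i-e_j)$. Because $\mathbb{E}_{\mu_{G',N,\beta}}[\sigma_i]=0$ for every $i$ by the $O(N)$-invariance of the Gibbs measure, one has $\mathbb{E}[W(e)]=0$, which recovers the stationarity of $h=0$ noted after Definition~\ref{GD}; differentiating once more at $t=0$ yields $-\beta\sum\|e_i-e_j\|^2+\beta^2\,\mathrm{Var}_{\mu_{G',N,\beta}}(W(e))$. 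To break Gaussian domination it therefore suffices to exhibit one direction $e$ with $\beta\,\mathrm{Var}(W(e))>\sum\|e_i-e_j\|^2$.

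I would pick the direction concentrated on the leaves: fix a unit vector $a\in\mathbb{R}^N$, put $e_{v_i}=a$ for all $i$ and $e_x=0$ for $x\in V$. Then $\sum\|e_i-e_j\|^2=m$ and $W(e)=\sum_{i=1}^m\sigma_{v_i}\cdot a-m\,\sigma_w\cdot a$. Since each leaf $v_i$ has $w$ as its unique neighbor, the leaf spins $(\sigma_{v_i})_{i=1}^m$ are, conditionally on $\sigma_w$, i.i.d.\ with density proportional to $\exp(\beta\,\sigma\cdot\sigma_w)$ on $\mathbb{S}^{N-1}$. By rotational invariance there is a constant $g(\beta,N)\in(0,1)$ with $\mathbb{E}[\sigma_{v_i}\mid\sigma_w]=g(\beta,N)\,\sigma_w$, and the conditional variance $\mathrm{Var}(\sigma_{v_i}\cdot a\mid\sigma_w)$ depends only on $\beta$, $N$ and $\sigma_w\cdot a$. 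Crucially, the same $O(N)$-invariance forces the marginal of $\sigma_w$ under $\mu_{G',N,\beta}$ to be uniform on $\mathbb{S}^{N-1}$, regardless of $G$; in particular $\mathbb{E}[(\sigma_w\cdot a)^2]=1/N$ and $\bar C(\beta,N):=\mathbb{E}[\mathrm{Var}(\sigma_{v_i}\cdot a\mid\sigma_w)]$ is a graph-free constant. The law of total variance then yields
\begin{equation*}
\mathrm{Var}(W(e))=m\,\bar C(\beta,N)+m^2\bigl(1-g(\beta,N)\bigr)^2/N,
\end{equation*}
and the desired inequality reduces to $\beta\,\bar C(\beta,N)+\beta m(1-g(\beta,N))^2/N>1$, which holds for any $m\geq m(\beta,N)$ large enough because $g(\beta,N)<1$ at every finite $\beta$. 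Setting $n_0:=m(\beta,N)$ then completes the argument.

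The only conceptual point (and the only step that is not a routine calculation) is the claim that $g$ and $\bar C$ are genuinely independent of the ambient graph $G$; this is where the pendant-star is essential. The leaves are coupled only to $w$, so their conditional law given $\sigma_w$ is written down explicitly without any reference to $G$, and the single-spin marginal of the $O(N)$-invariant Gibbs measure is automatically uniform on $\mathbb{S}^{N-1}$ whatever $G$ is. Once those two facts are in hand, the calculation above provides an explicit, $G$-independent threshold $m(\beta,N)$.
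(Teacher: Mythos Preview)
Your proposal is correct and follows essentially the same approach as the paper: the same pendant-star construction, the same test direction (your choice $e_{v_i}=a$, $e_x=0$ is, by the translation invariance of $Z^*$, identical to the paper's $v_x=e$ on $V$ and $v_{v_i}=0$ on the leaves), and the same key graph-independence fact (the marginal of the hub spin is uniform by global $O(N)$-symmetry). The only difference is packaging: you compute the second derivative in one shot via the law of total variance, whereas the paper first isolates the star graph in a lemma and then observes that $Z^*_{G',N,\beta}(\eta v)$ factors as a $G$-dependent constant times the star quantity.
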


As a byproduct we obtain the following zero temperature convergence at any finite graph; this is widely expected and accepted, but its proof seems not to be flashed out. Using reflection positivity in \cite{6}, \cite{7}, and \cite{22}, it has been shown for a growing sequence of symmetric graphs.

\begin{thm}
\label{Wcon}
Given $N\geq 2$ and a connected graph $G=(V, E)$, for a fixed point $1\in V$, we consider the spin $O(N)$ model $\sigma\in (\mathbb{S}^{N-1})^{V}$ on $G$ with inverse temperature $\beta$ and $\sigma_1$ rooted to be the north point (i.e. $\sigma_1^N=1$). Let $\gamma\in (\mathbb{R}^{N-1})^{V}$ denote an $(N-1)$-component vector-valued Gaussian Free Field on $G$, with $\gamma_1$ rooted at $0$. Then, as $\beta \to \infty$, the first $(N-1)$ components of the rescaled spin field converge in law to the Gaussian Free Field:
\begin{equation}
(\sqrt{\beta}\sigma^1, \sqrt{\beta}\sigma^2, \dots \sqrt{\beta}\sigma^{N-1} ) \stackrel{L}{\longrightarrow} \gamma.
\end{equation}
\end{thm}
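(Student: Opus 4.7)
The plan is to work in local coordinates around the north pole. For $i \neq 1$, parameterize $\sigma_i = (\tau_i, \sqrt{1-|\tau_i|^2})$ with $\tau_i \in \mathbb{R}^{N-1}$ on the upper hemisphere, and set $\eta_i = \sqrt{\beta}\tau_i$, $\eta_1 = 0$. The goal is to show that the law of $\eta = (\eta_i)_{i\in V}$ under $\mu_{G,N,\beta}$ converges weakly to the law of the $(N-1)$-component GFF on $G$ with Dirichlet boundary at $1$.

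\textbf{A priori localization.} I would first establish the uniform energy bound $\mathbb{E}_{\mu_{G,N,\beta}}|\sigma_i - \sigma_j|^2 \leq C(G,N)/\beta$ for each edge $\{i,j\}\in E$. Using the thermodynamic identity
\begin{equation*}
\mathbb{E}_{\mu_{G,N,\beta}}\sum_{\{i,j\}\in E}(1-\sigma_i\cdot\sigma_j) = |E| - \partial_\beta \log Z_{G,N,\beta},
\end{equation*}
it suffices to obtain the lower bound $\log Z_{G,N,\beta} \geq \beta|E| - C(G,N)(\log \beta + 1)$, which follows by restricting the integral to a small neighbourhood of the fully aligned configuration and Gaussian-comparing the Hamiltonian. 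Combined with the triangle inequality along a path in $G$ to vertex $1$, this yields $\mathbb{E}|\sigma_i - e_N|^2 = O(1/\beta)$ for every $i$. Consequently $\sup_\beta \mathbb{E}|\eta_i|^2 < \infty$ (so the family $\{\eta\}_\beta$ is tight) and $\mathbb{P}_{\mu_{G,N,\beta}}(\text{all }\sigma_i \text{ lie on the open upper hemisphere}) \to 1$.

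\textbf{Change of variables.} Conditional on being on the upper hemisphere, substitute $\sigma_i \mapsto \tau_i$ with sphere Jacobian $(1-|\tau_i|^2)^{-1/2}$, and rescale $\eta_i = \sqrt{\beta}\tau_i$. Using $\sigma_i\cdot\sigma_j = 1 - \frac{1}{2}|\sigma_i - \sigma_j|^2$ together with
\begin{equation*}
|\sigma_i-\sigma_j|^2 = |\tau_i-\tau_j|^2 + \bigl(\sqrt{1-|\tau_i|^2} - \sqrt{1-|\tau_j|^2}\bigr)^2,
\end{equation*}
the density of $\eta = (\eta_i)_{i\neq 1}$ with respect to Lebesgue measure on $(\mathbb{R}^{N-1})^{V\setminus\{1\}}$ is, up to a $\beta$-dependent normalizing constant, proportional to
\begin{equation*}
g_\beta(\eta) = \exp\Bigl\{-\tfrac{1}{2}\sum_{\{i,j\}\in E}|\eta_i-\eta_j|^2 - R_\beta(\eta)\Bigr\} \prod_{i\neq 1}\bigl(1 - |\eta_i|^2/\beta\bigr)^{-1/2} \mathbf{1}_{\{\max_i |\eta_i| < \sqrt{\beta}\}},
\end{equation*}
where $R_\beta(\eta) = \tfrac{\beta}{2}\sum_{\{i,j\}\in E}(\sqrt{1-|\eta_i|^2/\beta}-\sqrt{1-|\eta_j|^2/\beta})^2 \geq 0$ is a non-negative remainder vanishing pointwise as $\beta \to \infty$.

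\textbf{Passing to the limit.} Pointwise $g_\beta(\eta) \to g(\eta) := \exp(-\tfrac{1}{2}\sum_{\{i,j\}\in E}|\eta_i-\eta_j|^2)$, which after normalization is the density of the target GFF. For any continuous test function $\phi$ with compact support, the Jacobian and indicator factors are uniformly bounded on $\mathrm{supp}(\phi)$ for all large $\beta$, so dominated convergence gives $\int \phi g_\beta \to \int \phi g$; applying the same to $\phi\equiv 1$ on expanding compacts fixes the ratio of normalizing constants. Combined with the tightness from step 2 (which reduces bounded continuous $\phi$ to compactly supported ones by truncation) and the upper-hemisphere concentration, we conclude $\mathbb{E}_{\mu_{G,N,\beta}}[\phi(\eta)] \to \mathbb{E}_\nu[\phi]$ with $\nu$ the GFF law, i.e.\ the desired weak convergence. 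The main obstacle I anticipate is the uniform energy estimate in step 2: while the Gaussian scaling picture makes it intuitive, extracting $\mathbb{E}|\sigma_i - \sigma_j|^2 = O(1/\beta)$ before knowing the full asymptotic expansion of $Z_{G,N,\beta}$ requires a careful elementary lower bound on the partition function near the ground state.
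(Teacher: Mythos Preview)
Your overall strategy---parametrize the upper hemisphere by tangent-space coordinates $\tau_i$, rescale to $\eta_i=\sqrt{\beta}\,\tau_i$, and show pointwise convergence of the resulting density $g_\beta$ to the GFF density $g$---is exactly the route the paper takes. The difference lies entirely in how the tails are controlled, and there your argument has a gap.

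You claim that the lower bound $\log Z_{G,N,\beta}\ge\beta|E|-C(\log\beta+1)$ \emph{suffices} to deduce $\mathbb{E}_{\mu_{G,N,\beta}}|\sigma_i-\sigma_j|^2\le C/\beta$ via the identity $\sum_{\{i,j\}}\mathbb{E}(1-\sigma_i\cdot\sigma_j)=|E|-\partial_\beta\log Z$. But a one-sided bound on $\log Z$ does not control its derivative. Combining your lower bound with the trivial upper bound $\log Z\le\beta|E|+O(1)$ and the convexity of $\beta\mapsto\log Z$ yields only
\[
\partial_\beta\log Z(\beta)\ \ge\ \frac{\log Z(\beta)-\log Z(\beta/2)}{\beta/2}\ \ge\ |E|-\frac{C'\log\beta}{\beta},
\]
hence $\mathbb{E}\sum(1-\sigma_i\cdot\sigma_j)=O((\log\beta)/\beta)$, not $O(1/\beta)$. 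Consequently you only get $\mathbb{E}|\eta_i|^2=O(\log\beta)$, and the assertion $\sup_\beta\mathbb{E}|\eta_i|^2<\infty$ is unproved; the tightness you invoke in step~3 (and the vague ``$\phi\equiv 1$ on expanding compacts'') is therefore not justified as written. You correctly flagged this step as the main obstacle, but the specific reduction you propose does not close it.

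The paper sidesteps the issue by never normalizing: it shows directly that both the box-restricted integral and the full unnormalized integral $\int e^{-\beta H/2}\,d(\sqrt\beta\sigma)$ converge to their GFF counterparts. For the full integral the upper bound is a pointwise estimate: if some $\sigma_x^N\le 1-\epsilon$ then, by connectedness of $G$, some edge has $|\sigma_i^N-\sigma_j^N|>\epsilon/n$, so the integrand is at most $e^{-\beta\epsilon^2/(2n^2)}$ and that region contributes $\mathrm{poly}(\beta)\,e^{-c\beta}\to 0$; on the region $\{\sigma_x^N>1-\epsilon\ \forall x\}$ the Radon--Nikodym derivative of sphere measure over tangent-plane measure is at most $1+\delta$, giving $(1+\delta)^{n-1}\int g$. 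This direct tail control replaces your moment-based tightness and is the natural fix for your argument. (Alternatively, the $O((\log\beta)/\beta)$ bound you do obtain is enough if you let your truncation radius grow slowly with $\beta$ and verify that the Jacobian and the remainder $R_\beta$ stay controlled there---but that has to be carried out explicitly.)
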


Here we use superscripts to denote coordinates of points in $\mathbb{R}^N$. We prove also convergence of moments and rough quantitative estimates on the convergence rates, which unfortunately are not as good as one would hope - see Theorem \ref{mom} and Theorem \ref{Wbound} in Section \ref{sec4}.

Further, given that our counterexample  in Theorem \ref{addstar} involve graphs with sufficiently large degrees, it is natural to ask whether there exists a universal bound for the inverse temperature $\beta$ that guarantees Gaussian domination in certain types of graphs, particularly those with finite degree. However, the following result demonstrates that even for binary trees, such a universal bound does not exist.

\begin{thm}
\label{binary}
Let $N=1$, there exists $\beta_0>0$ such that for any $\beta>\beta_0$, there exists a binary tree $G=(V, E)$, such that there is no Gaussian domination in the spin $O(N)$ model on $G$ at inverse temperature $\beta$.
\end{thm}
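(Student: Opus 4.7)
My plan is to take $G$ to be the perfect binary tree of depth $D$, test the Hessian of $\log Z^*_{G,1,\beta}$ at $h=0$ against the single direction $v = \mathbf{1}_{\mathrm{leaves}}$, and use a branching-process recursion to show that this Hessian has a positive eigenvalue as soon as $\beta$ lies above the Kesten--Stigum threshold $\beta_0 := \ln(1+\sqrt{2}) = \mathrm{arctanh}(1/\sqrt{2})$ of the infinite binary tree.

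The first step is to turn Gaussian domination into a clean variational inequality. Expanding the quadratic $\tfrac12\sum_{\{i,j\}\in E}(\sigma_i+h_i-\sigma_j-h_j)^2$ in $h$ around $h=0$ (with $L$ the graph Laplacian) yields
\[
 Z^*_{G,1,\beta}(h) \;=\; Z^*_{G,1,\beta}(0)\, e^{-\tfrac{\beta}{2}h^T L h}\,\mathbb{E}_{\mu_{G,1,\beta}}\!\bigl[e^{-\beta h^T L \sigma}\bigr].
\]
Two differentiations at $h=0$, combined with spin-flip symmetry ($\mathbb{E}_{\mu_{G,1,\beta}}\sigma = 0$), give the Hessian $-\beta L + \beta^2 L \Sigma L$, where $\Sigma_{xy} = \mathbb{E}_{\mu_{G,1,\beta}}\sigma_x\sigma_y$. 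Gaussian domination at $h=0$ therefore fails the moment one can exhibit a vector $v \in \mathbb{R}^V$ with $\beta\,\mathrm{Var}(\sigma^T L v) > v^T L v$.

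For $G = T_D$ (root of degree $2$, other internal vertices of degree $3$, and $2^D$ leaves of degree $1$) and $v = \mathbf{1}_{\mathrm{leaves}}$, a direct computation shows that only the leaves and their depth-$(D-1)$ parents contribute to $Lv$, giving $v^T L v = 2^D$ and $\sigma^T L v = M_D - 2 M_{D-1}$, where $M_k := \sum_{x\,\text{at depth}\,k}\sigma_x$. The task then reduces to estimating $\mathrm{Var}(M_D - 2 M_{D-1})$. Conditioning on the spins at depth $\leq D-1$, the leaves become conditionally independent with $\mathbb{E}[\sigma_\ell \mid \sigma_{p(\ell)}] = \tanh(\beta)\,\sigma_{p(\ell)}$ and $\mathrm{Var}(\sigma_\ell \mid \sigma_{p(\ell)}) = 1-\tanh^2\beta$; since each depth-$(D-1)$ vertex has exactly two leaf children, the quantities $a_k := \mathbb{E}[M_k^2]$ satisfy the linear recursion
\[
a_D = 2^D(1-\tanh^2\beta) + 4\tanh^2(\beta)\,a_{D-1}, \qquad \mathbb{E}[M_D M_{D-1}] = 2\tanh(\beta)\,a_{D-1},
\]
and a short manipulation gives
\[
\mathrm{Var}(M_D - 2 M_{D-1}) \;=\; 2^D(1-\tanh^2\beta) + 4(1-\tanh\beta)^2\,a_{D-1}.
\]

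The conclusion will then be immediate. For $\beta > \beta_0$ one has $4\tanh^2\beta > 2$, so the homogeneous part of the recursion dominates and $a_{D-1} \sim c(\beta)(4\tanh^2\beta)^{D-1}$ grows strictly faster than $2^{D-1}$. Hence $\mathrm{Var}(M_D - 2 M_{D-1})/2^D \to \infty$ as $D \to \infty$, and choosing $D$ large enough (depending on $\beta$) forces $\beta\,\mathrm{Var}(\sigma^T L v) > v^T L v$, breaking Gaussian domination on $T_D$. The delicate point will be the factor $(1-\tanh\beta)^2$ that appears in the variance identity: it is exponentially small at low temperature, so the argument works only because the exponential growth of $a_{D-1}$ above the Kesten--Stigum threshold $\beta_0$ is just fast enough to compensate for this vanishing coefficient.
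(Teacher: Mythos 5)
Your proof is correct and targets the same direction vector as the paper does: $v=\mathbf{1}_{\mathrm{leaves}}$ on a perfect binary tree, with the same Kesten--Stigum criterion $2\tanh^2\beta>1$ emerging as the threshold. The computational route, however, is somewhat cleaner than the paper's. You first package the second-order test into the identity $Z_v''(0)\propto \beta\,\mathrm{Var}_{\mu}(v^T L\sigma)-v^T L v$ (which the paper states implicitly in its equation $(3.4)$/$(3.5)$ but does not re-use in Section~6), observe that $v^T L\sigma = M_D - 2M_{D-1}$, and then run a two-step branching recursion for $a_k=\mathbb{E}[M_k^2]$. The paper instead integrates out the leaves, Taylor-expands the resulting factor $\mathcal{U}(\sigma_i\eta)^2$ at each parent-of-leaf, and evaluates $\sum_{i<j}\mathbb{E}[\sigma_i\sigma_j]$ by the exact tree two-point formula $\tanh(\beta)^{d(i,j)}$. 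Both calculations are computing essentially the same object: the sum of spin covariances over the penultimate level. Your version avoids the delicate bookkeeping of Taylor coefficients of $\mathcal{U}$ and isolates the two ingredients transparently — the $(1-\tanh\beta)^2$ prefactor (exponentially small in $\beta$, as you correctly flag) multiplied by $a_{D-1}\asymp(4\tanh^2\beta)^{D-1}$, whose exponential growth in $D$ above the threshold dominates the $2^D$ on the right. The only thing worth tightening in a full write-up is an explicit solution of the recursion showing $a_{D-1}/(4\tanh^2\beta)^{D-1}$ converges to a strictly positive constant when $4\tanh^2\beta>2$, so that the divergence of $\mathrm{Var}(M_D-2M_{D-1})/2^D$ is justified; otherwise the argument is complete.
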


Of course, the absence of Gaussian domination does not imply the absence of long range order. Motivated by this, we consider the following question, which may be of independent interest. In what follows, the renormalized Green's function between $x$ and $y$ refers to the quantity 
\[
\frac{u_{xy}(x)}{d(x)} = \frac{u_{yx}(y)}{d(y)}.
\]
(This identity can be quickly verified from Theorem~\ref{mom}.)

\begin{quest}
Given $N \in \mathbb{N}$ and $\epsilon > 0$, suppose that the renormalized Green's function of a graph $G$ is bounded above by $M$. Does there exist a universal critical inverse temperature $\beta_c = \beta_c(N, \epsilon, M)$ such that for all $x, y \in G$ and all $\beta > \beta_c$, the spin $O(N)$ model satisfies
\[
\mathbb{E}_{\mu_{G,N,\beta}} \sigma_x \cdot \sigma_y \geq 1 - \epsilon?
\]
\end{quest}
We note that if the renormalized Green's function is interpreted specifically as
$\frac{u_{xy}(x)}{d(x)}$
then the following result provides a counterexample:

\begin{thm}
\label{ppath}
Let $N = 1$. For any $M > 0$ and $\epsilon > 0$, there does not exist a universal $\beta_c = \beta_c(N, \epsilon, M)$ such that for every graph $G$, every pair of vertices $x, y \in G$ with $\frac{u_{xy}(x)}{d(x)} < M$, and every $\beta > \beta_c$, the inequality
\[
\mathbb{E}_{\mu_{G,N,\beta}} \sigma_x \cdot \sigma_y > 1 - \epsilon
\]
holds.
\end{thm}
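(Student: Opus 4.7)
The plan is to rule out the universal $\beta_c$ by exhibiting, for each large $\beta$, a ``theta graph'' $G_\beta$ consisting of $k = k(\beta)$ internally vertex-disjoint paths of length $L = L(\beta)$ joining two vertices $x$ and $y$. Setting $k := \lceil L/M \rceil + 1$, and recalling that $u_{xy}(x)/d(x)$ coincides with the unit-conductance effective resistance $R_{\mathrm{eff}}(x,y)$ (one sees this directly: by gambler's ruin on each path, $P_x(H_y < H_x^+) = 1/L$, so $u_{xy}(x) = L$), the parallel-resistance law gives $u_{xy}(x)/d(x) = L/k < M$. So the hypothesis on the renormalized Green's function is satisfied uniformly in $\beta$, whatever the choice of $L$.

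Next I would compute $\mathbb{E}_{\mu_{G_\beta,1,\beta}}\,\sigma_x\sigma_y$ exactly by integrating out the internal spins along each path via the one-dimensional transfer matrix. For a single path of length $L$, this classical decimation yields an effective two-spin coupling $J_1$ with $\tanh(\beta J_1) = (\tanh\beta)^L$. Since the $k$ paths share only the endpoints $x$ and $y$, the single-path partition functions factor and the effective couplings add, giving
\begin{equation*}
\mathbb{E}_{\mu_{G_\beta,1,\beta}}\,\sigma_x\sigma_y \;=\; \tanh\!\bigl(k\,\operatorname{arctanh}\bigl((\tanh\beta)^L\bigr)\bigr).
\end{equation*}

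Finally I would tune $L(\beta) := \lceil e^{3\beta}\rceil$. For $\beta$ large one has $-\log\tanh\beta \sim 2 e^{-2\beta}$, so $(\tanh\beta)^L \leq e^{-e^{\beta}}$ eventually; this super-exponential decay in $L$ easily swamps the polynomial factor $k \leq L/M + 2 \leq e^{3\beta}/M + 2$. Combined with $\operatorname{arctanh}(r) \leq 2r$ for $r \leq 1/2$, the displayed right-hand side tends to $0$ as $\beta \to \infty$, and in particular lies below $1-\epsilon$ for all sufficiently large $\beta$, contradicting the existence of any universal $\beta_c(N,\epsilon,M)$. I do not foresee a substantive obstacle here: the construction is completely explicit and both computations (effective resistance and correlation) are classical. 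The only delicacy is the calibration of $L(\beta)$ so that the effective one-dimensional correlation decay in $L$ along each single path outpaces the multiplicative parallel-path gain $k \asymp L/M$ needed to keep the resistance bounded.
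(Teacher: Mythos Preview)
Your proposal is correct and uses essentially the same construction as the paper: a ``theta graph'' of $k$ parallel length-$L$ paths between $x$ and $y$, together with the explicit Ising computation $\mathbb{E}\,\sigma_x\sigma_y=\tanh\bigl(k\operatorname{arctanh}((\tanh\beta)^L)\bigr)$ and the effective-resistance identity $u_{xy}(x)/d(x)=L/k$. The paper writes the same correlation in the algebraically equivalent form $\bigl((1+r^{l+1})^d-(1-r^{l+1})^d\bigr)/\bigl((1+r^{l+1})^d+(1-r^{l+1})^d\bigr)$ with $r=\tanh\beta$, fixes $M=3$, $\epsilon=1/2$, sets $d=l$, and sends $l\to\infty$ for \emph{fixed} $\beta$ (so $d\,r^{l+1}\to 0$); you instead keep $M$ general via $k=\lceil L/M\rceil+1$ and send $\beta\to\infty$ with $L(\beta)=\lceil e^{3\beta}\rceil$. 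Both calibrations achieve the same thing---the single-path decay $(\tanh\beta)^L$ kills the linear-in-$L$ parallel gain---so the arguments are interchangeable; your version has the mild advantage of treating arbitrary $M>0$ explicitly rather than only $M=3$.
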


Alternatively, if the renormalized Green's function is taken to refer more broadly to the maximum of $\frac{u_{ij}(i)}{d(i)}$ over all pairs of distinct vertices $i, j \in G$, the problem is still open.\\

The remainder of the paper is organized as follows. In Section~\ref{sec3}, we present the proof of Theorem~\ref{GDE}. Section~\ref{sec4} establishes Gaussian domination in the high-temperature and low-temperature regimes related to Theorem~\ref{high}, Theorem~\ref{low} and other byproducts. Sections~\ref{sec5}-\ref{sec7} provide three counterexamples corresponding to Theorems~\ref{addstar}, \ref{binary}, and \ref{ppath}, respectively.

\section{From Gaussian domination to long-range correlations}
\label{sec3}
In this section, we aim to prove Theorem \ref{GDE}. We will need the following lemma for the Green's function of $G$. Recall (\ref{uu}) for the definition of $u$. We remind the reader that the function $u$ is indexed by two subscripts.
\begin{lem}
\label{deltalem}
For any $i,j \in V$, $i\neq j$, we have
\begin{equation}
\Delta u_{ij}(s)=\left\{
\begin{aligned}
-d(i), &       &s=j,\\
d(i),  &       &s=i,\\
0,     &       &\text{otherwise},
\end{aligned}
\right.
\end{equation}
where $\Delta u_{ij}(s):=\sum\limits_{\{s,l\}\in E} (u_{ij}(s)-u_{ij}(l))$ is the Laplacian of $u_{ij}$ on $V$.
\end{lem}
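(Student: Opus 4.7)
The plan is to split the argument according to whether $s = j$ or not, handling the generic case via a one-step conditioning and the boundary case via the fact that any graph Laplacian sums to zero over the vertex set.

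First, for $s \neq j$ I would condition on the first step $X_1$ of the random walk started at $s$. Since $s \neq j$ we have $H_j \geq 1$, so separating the $k=0$ contribution from the sum in (\ref{uu}) and applying the Markov property at time $1$ yields
\begin{equation*}
u_{ij}(s) \;=\; \mathbf{1}_{s = i} \;+\; \frac{1}{d(s)} \sum_{\{s, l\} \in E} u_{ij}(l),
\end{equation*}
where the first term comes from the $k=0$ summand and the second from running the walk from a uniformly chosen neighbour. Multiplying through by $d(s)$ and rearranging gives $\Delta u_{ij}(s) = d(s)\,\mathbf{1}_{s = i}$, which covers both $s = i$ (yielding $d(i)$) and $s \notin \{i,j\}$ (yielding $0$), as required.

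For the remaining case $s = j$, rather than trying to evaluate $\Delta u_{ij}(j) = -\sum_{\{j,l\}\in E} u_{ij}(l)$ directly in terms of the neighbour values of $u_{ij}$, I would appeal to the elementary identity
\begin{equation*}
\sum_{s \in V} \Delta u_{ij}(s) \;=\; \sum_{s \in V}\sum_{\{s,l\}\in E}\bigl(u_{ij}(s) - u_{ij}(l)\bigr) \;=\; 0,
\end{equation*}
which holds for any function on $V$ because each unoriented edge $\{s,l\}$ appears once with each sign in the double sum. Combined with the previous step, the only non-zero contribution from $V\setminus\{j\}$ is $\Delta u_{ij}(i) = d(i)$, which forces $\Delta u_{ij}(j) = -d(i)$.

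There is no genuine obstacle here; the only point requiring a little care is that the one-step Markov recursion breaks down at $s = j$, since $H_j = 0$ and the sum defining $u_{ij}(j)$ is empty (so $u_{ij}(j) = 0$ by convention). The Laplacian-sum trick sidesteps this boundary issue cleanly and avoids any explicit computation of $u_{ij}$ at neighbours of $j$. Conceptually, the lemma just states that $u_{ij}/d(i)$ is the Green's function on $G$ with a unit source at $i$ and absorbing boundary at $j$, so the two nonzero values of $\Delta u_{ij}$ are simply the source and the absorbing sink.
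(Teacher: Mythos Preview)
Your proof is correct and follows essentially the same approach as the paper: use the one-step Markov property to handle $s\neq j$, then invoke $\sum_{s\in V}\Delta u_{ij}(s)=0$ to pin down the value at $s=j$. The paper's proof is just a terser version of yours, stating the Markov-property step without writing out the recursion explicitly.
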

\begin{proof}
Note that $\sum\limits_{s\in V} \Delta u_{ij}(s)=0$, to compute the Laplacian of $u_{ij}$ we only need to check that $\Delta u_{ij}(i)= d(i)$ and $\Delta u_{ij}(s)= 0$ for $s\in V\setminus \{i,j\}$ by the Markov property.
\end{proof}

We now begin the proof of Theorem \ref{GDE}. For the convenience of the reader, we restate it here.
\begin{thm}
\label{GDE2}
Given $N\geq1$, a connected graph $G=(V, E)$, and $\beta \in [0, +\infty)$, if there is a Gaussian domination in the spin $O(N)$ model on  $G$ at inverse temperature $\beta$, we have that $\mathbb{E}_{\mu_{G,N,\beta}} \sigma_x \cdot \sigma_y \geq 1-\frac{Nu_{xy}(x)}{2\beta d(x)}$ for any $x,y \in V$.
\end{thm}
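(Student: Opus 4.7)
The plan is to exploit Gaussian domination by probing the Hessian of $Z^*_{G,N,\beta}$ at $0$ along a one-parameter family of shifts whose profile is tailored to pick out the pair $(x,y)$. Concretely, I fix a unit vector $e \in \mathbb{R}^N$ and a scalar function $f : V \to \mathbb{R}$, and set $h_i(t) = t\, f(i)\, e$. Expanding
\[
-\tfrac{\beta}{2}\sum_{\{i,j\}\in E}\|\sigma_i+h_i-\sigma_j-h_j\|^2 = -\tfrac{\beta}{2}\sum_{\{i,j\}}\|\sigma_i-\sigma_j\|^2 - \beta t\, T - \tfrac{\beta t^2}{2}\, D,
\]
where $T := \sum_{\{i,j\}\in E}(f(i)-f(j))(\sigma_i-\sigma_j)\cdot e$ and $D := \sum_{\{i,j\}\in E}(f(i)-f(j))^2$, and differentiating twice under the integral at $t=0$ yields
\[
\tfrac{d^2}{dt^2}Z^*_{G,N,\beta}(th)\big|_{t=0} \;=\; Z^*_{G,N,\beta}(0)\bigl(\beta^2\, \mathbb{E}_{\mu_{G,N,\beta}}[T^2]\; -\; \beta D\bigr).
\]
Gaussian domination forces this to be non-positive, hence $\beta\, \mathbb{E}_{\mu_{G,N,\beta}}[T^2] \le D$. (The first derivative automatically vanishes because $Z^*_{G,N,\beta}(h)=Z^*_{G,N,\beta}(-h)$ via $\sigma\mapsto -\sigma$.)

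Next I rewrite $T$ by summation by parts: collecting terms edge by edge, the coefficient of $\sigma_i\cdot e$ in $T$ is exactly $\sum_{j\sim i}(f(i)-f(j)) = \Delta f(i)$, so $T = \sum_{i\in V} \Delta f(i)\,\sigma_i\cdot e$. Now I choose $f := u_{xy}$. By Lemma \ref{deltalem}, $\Delta u_{xy}$ is supported on $\{x,y\}$ with values $d(x)$ and $-d(x)$, so $T = d(x)\, (\sigma_x-\sigma_y)\cdot e$. The same lemma, together with the identity $\sum_{\{i,j\}\in E}(g(i)-g(j))^2 = \sum_{i\in V} g(i)\Delta g(i)$ and $u_{xy}(y)=0$, collapses the Dirichlet form to $D = d(x)\, u_{xy}(x)$.

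Substituting back gives $\beta d(x)^2\, \mathbb{E}_{\mu_{G,N,\beta}}[((\sigma_x-\sigma_y)\cdot e)^2] \le d(x)\, u_{xy}(x)$, and summing over an orthonormal basis $e_1,\dots,e_N$ of $\mathbb{R}^N$ converts the left-hand side into $\beta d(x)^2\, \mathbb{E}_{\mu_{G,N,\beta}}\|\sigma_x-\sigma_y\|^2 = 2\beta d(x)^2(1-\mathbb{E}_{\mu_{G,N,\beta}}\sigma_x\cdot\sigma_y)$. Rearranging yields the claimed bound. The only genuinely non-routine step is the choice $f=u_{xy}$; everything else is bookkeeping. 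The strategy is the standard infrared-bound trick adapted to a general graph, with the hitting-time Green's function playing the role of the discrete Fourier multiplier $1/\hat{\Delta}$ that appears in the torus case.
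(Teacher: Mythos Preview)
Your proof is correct and follows essentially the same route as the paper: probe the Hessian of $Z^*_{G,N,\beta}$ at $0$ along $h_i = t\,u_{xy}(i)\,e$, use Lemma~\ref{deltalem} to collapse both the linear term to $d(x)(\sigma_x-\sigma_y)\cdot e$ and the Dirichlet energy to $d(x)\,u_{xy}(x)$, and then sum over an orthonormal basis. The only cosmetic difference is that the paper normalizes by $1/d(x)$ upfront (taking $v=\tfrac{1}{d(x)}u_{xy}\,e$) whereas you carry the factor $d(x)^2$ through and cancel at the end; the case $x=y$ is trivial and the paper also sets it aside.
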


\begin{proof}
We only need to consider the case $x\neq y$. Fix a direction vector $v= (v_1, v_2, \dots, v_n) \in (\mathbb{R}^N)^V$ ($v \neq 0$). We define $Z_v(\eta) : \mathbb{R} \to \mathbb{R}$ by $Z_v(\eta):= Z^*_{G, N, \beta}(\eta v)$ for any $\eta \in \mathbb{R}$. By Gaussian domination and symmetry, we have $Z_{v}'(0)=0$ and $Z_{v}''(0)\leq0$. On the other hand, since
\begin{equation}
Z_v(\eta)=\int_{(\mathbb{S}^{N-1})^V}\exp(-\frac{\beta}{2}\sum\limits_{\{i, j\}\in E}\lVert\sigma_i+\eta v_i-\sigma_j- \eta v_j\lVert^2) d\sigma,
\end{equation}
by the dominated convergence theorem, we have
\begin{equation}
Z_v'(\eta)= -\beta\int_{(\mathbb{S}^{N-1})^V}[\sum_{\{i, j\}\in E}(v_i-v_j)\cdot(\sigma_i+\eta v_i-\sigma_j- \eta v_j)]\exp(-\frac{\beta}{2}\sum\limits_{\{i, j\}\in E}\lVert\sigma_i+\eta v_i-\sigma_j- \eta v_j\rVert^2) d\sigma.
\end{equation}
Furthermore,
\begin{equation}
\label{de}
\begin{aligned}
Z_v''(0)=&  \beta^2\int_{(\mathbb{S}^{N-1})^V}[\sum_{\{i, j\}\in E}(v_i-v_j)\cdot(\sigma_i-\sigma_j)]^2\exp(-\frac{\beta}{2}\sum\limits_{\{i, j\}\in E}\lVert\sigma_i-\sigma_j\rVert^2) d\sigma \\
&-\beta\int_{(\mathbb{S}^{N-1})^V}[\sum_{\{i, j\}\in E}\lVert v_i-v_j \rVert^2]\exp(-\frac{\beta}{2}\sum\limits_{\{i, j\}\in E}\lVert\sigma_i-\sigma_j\rVert^2) d\sigma.
\end{aligned}
\end{equation}
For any unit vector $e\in \mathbb{R}^N$, if we take $v=\frac{1}{d(x)}u_{xy}e$, we will have
\begin{equation}
\label{de2}
\begin{aligned}
Z_v''(0)=&  \beta^2\int_{(\mathbb{S}^{N-1})^V}[\sum_{i\in V}\frac{1}{d(x)}\Delta u_{xy}(i)e\cdot \sigma_i]^2\exp(-\frac{\beta}{2}\sum\limits_{\{i, j\}\in E}\lVert\sigma_i-\sigma_j \rVert^2) d\sigma \\
&-\beta\int_{(\mathbb{S}^{N-1})^V}[\sum_{i\in V}\frac{1}{d(x)^2}\Delta u_{xy}(i)\cdot u_{xy}(i)]\exp(-\frac{\beta}{2}\sum\limits_{\{i, j\}\in E}\lVert\sigma_i-\sigma_j \rVert^2) d\sigma.\\
=& \beta^2\int_{(\mathbb{S}^{N-1})^V}[e\cdot \sigma_x- e\cdot \sigma_y]^2\exp(-\frac{\beta}{2}\sum\limits_{\{i, j\}\in E}\lVert\sigma_i-\sigma_j \rVert^2) d\sigma\\
&-\frac{\beta}{d(x)}\int_{(\mathbb{S}^{N-1})^V}[u_{xy}(x)-u_{xy}(y)]\exp(-\frac{\beta}{2}\sum\limits_{\{i, j\}\in E}\lVert\sigma_i-\sigma_j \rVert^2) d\sigma.
\end{aligned}
\end{equation}
Note that $Z_v''(0)\leq 0$ and $u_{xy}(y)=0$, we have
\begin{equation}
\int_{(\mathbb{S}^{N-1})^V}[e \cdot (\sigma_x-\sigma_y)]^2\exp(-\frac{\beta}{2}\sum\limits_{\{i, j\}\in E}\lVert\sigma_i-\sigma_j \rVert^2) d\sigma \leq \frac{u_{xy}(x)}{\beta d(x)}\int_{(\mathbb{S}^{N-1})^V}\exp(-\frac{\beta}{2}\sum\limits_{\{i, j\}\in E}\lVert\sigma_i-\sigma_j \rVert^2) d\sigma.
\end{equation}
Thus, if we let \( e \) range over the standard orthonormal basis \( \{e_1, e_2, \dots, e_N\} \) of \( \mathbb{R}^N \), and sum the inequality above over all such \( e \), we obtain the following result.

\begin{equation}
\mathbb{E}_{\mu_{G,N,\beta}} \lVert \sigma_x - \sigma_y \rVert^2\leq \frac{N u_{xy}(x)}{\beta d(x)},
\end{equation}
which means that
\begin{equation}
\mathbb{E}_{\mu_{G,N,\beta}} \sigma_x \cdot \sigma_y \geq 1-\frac{N u_{xy}(x)}{2\beta d(x)}.
\end{equation}
\end{proof}

\section{Phase transition for Gaussian domination}
\label{sec4}
In this section, we will work on the phase transition associated with Gaussian domination on a fixed finite graph.

\subsection{Gaussian domination at high temperature}

We first prove Theorem~\ref{high}, which establishes Gaussian domination in the high-temperature regime.
\begin{thm}
Given $N\geq1$ and a connected graph $G=(V, E)$, there exists $\beta_s>0$, such that there is a Gaussian domination in the spin $O(N)$ model for $G$ and $0<\beta <\beta_s$.
\end{thm}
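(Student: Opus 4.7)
The plan is to verify Gaussian domination by directly checking that $h=0$ is a local maximum of $Z^*_{G,N,\beta}$, using the second variation formula already computed in the proof of Theorem \ref{GDE}.

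First, the $\sigma \mapsto -\sigma$ symmetry of the integrand makes $h=0$ a critical point of $Z^*_{G,N,\beta}$, so it remains to control the Hessian. For an arbitrary direction $v = (v_1, \ldots, v_n) \in (\mathbb{R}^N)^V$, equation (\ref{de}) gives
\[
Z_v''(0) = \beta^2 \int \biggl[\sum_{\{i,j\}\in E}(v_i - v_j)\cdot(\sigma_i - \sigma_j)\biggr]^2 e^{-\frac{\beta}{2}\sum_{\{i,j\}\in E}\|\sigma_i - \sigma_j\|^2}\,d\sigma \;-\; \beta\, B(v)\, Z^*_{G,N,\beta}(0),
\]
where $B(v) := \sum_{\{i,j\}\in E}\|v_i - v_j\|^2 \geq 0$.

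The crux is to dominate the positive $\beta^2$-term by the negative $\beta$-term via a pointwise Cauchy--Schwarz inequality in the edge variable:
\[
\biggl[\sum_{\{i,j\}\in E}(v_i - v_j)\cdot(\sigma_i - \sigma_j)\biggr]^2 \;\leq\; B(v) \cdot \sum_{\{i,j\}\in E}\|\sigma_i - \sigma_j\|^2 \;\leq\; 4|E|\cdot B(v),
\]
using the crude sphere bound $\|\sigma_i - \sigma_j\| \leq 2$. Substituting this into the expression for $Z_v''(0)$ gives
\[
Z_v''(0) \;\leq\; \bigl(4\beta^2|E| - \beta\bigr)\,B(v)\,Z^*_{G,N,\beta}(0),
\]
which is strictly negative as soon as $0 < \beta < \beta_s := \tfrac{1}{4|E|}$ and $B(v) > 0$. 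Since $G$ is connected, $B(v) = 0$ if and only if $v$ is a constant vector in $(\mathbb{R}^N)^V$, and along such directions $Z^*_{G,N,\beta}$ is exactly invariant.

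Finally, to upgrade negative semi-definiteness of the Hessian to the assertion that $h=0$ is a local maximum, decompose $(\mathbb{R}^N)^V = W \oplus W^\perp$, where $W$ is the $N$-dimensional subspace of constant shifts. By the estimate above, the restriction of $Z^*_{G,N,\beta}$ to $W^\perp$ has strictly negative definite Hessian at the origin, hence a strict local maximum there by second-order Taylor expansion; the exact translation invariance along $W$ then lifts this to a (non-strict) local maximum of $Z^*_{G,N,\beta}$ at $h=0$, yielding Gaussian domination for all $\beta \in (0, \beta_s)$. The only mildly delicate point is keeping track of the translational null direction when passing from Hessian negativity to a genuine local maximum; the rest is an elementary Cauchy--Schwarz computation.
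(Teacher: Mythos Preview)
Your argument is correct and follows essentially the same route as the paper: apply the second-variation formula (\ref{de}), then use Cauchy--Schwarz over edges together with the crude bound $\|\sigma_i-\sigma_j\|^2\le 4$ to dominate the $\beta^2$-term by the $\beta$-term. Your threshold $\beta_s = \tfrac{1}{4|E|}$ is in fact slightly sharper than the paper's $\tfrac{1}{8|E|}$, and you are a bit more explicit than the paper about handling the constant-shift null direction via the $W\oplus W^\perp$ decomposition, but otherwise the proofs coincide.
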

\begin{proof}
We will use the computation in the proof of Theorem \ref{GDE2}. We first fixed the direction vector $v= (v_1, v_2, \dots, v_n) \in (\mathbb{R}^N)^V$ ($v \neq 0$). Note that if $v_x (x\in V)$ are all the same, the modified partition function would be a constant. Otherwise by $(\ref{de})$ it will be enough to show that for any $\sigma \in (\mathbb{S}^{N-1})^V$,
\begin{equation}
\label{ineq}
\beta^2[\sum_{\{i, j\}\in E}(v_i-v_j)\cdot(\sigma_i-\sigma_j)]^2 < \beta \sum_{\{i, j\}\in E}\lVert v_i-v_j \rVert^2,
\end{equation}
so that the second-order derivative of the modified partition function is negative, and we have the Gaussian domination. By the Cauchy-Schwarz inequality, we have
\begin{equation}
[\sum_{\{i, j\}\in E}(v_i-v_j)\cdot(\sigma_i-\sigma_j)]^2 \leq [\sum_{\{i, j\}\in E}\lVert\sigma_i-\sigma_j\rVert^2][\sum_{\{i, j\}\in E}\lVert v_i-v_j \rVert^2].
\end{equation}
On the other hand, note that $\lVert\sigma_i-\sigma_j\rVert^2 \leq 4$, so we take $\beta_s=\frac{1}{8|E|}$ and since $\sum\limits_{\{i, j\}\in E}\lVert v_i-v_j \rVert^2>0$, (\ref{ineq}) holds and there is a Gaussian domination for $G$ and $0<\beta <\beta_s$.
\end{proof}

\subsection{Gaussian domination at low temperature}
We now turn to the low-temperature regime, beginning with the proofs of several auxiliary results (Theorems~\ref{Wcon1}–\ref{Wbound}), which are essential for establishing Gaussian domination.

The first auxiliary result is Theorem \ref{Wcon}. For the convenience of the reader, we give out the law of the vector-valued Gaussian free field and restate the theorem here.
\begin{defn}[\textbf{Vector-valued Gaussian free field}]
The $N-$ component vector-valued Gaussian free field with free boundary condition on a finite undirected unweighted connected graph $G=(V, E)$ is a Gaussian vector $\gamma = (\gamma_1, \dots, \gamma_n) \in (\mathbb{R}^{N})^{V}$ with probability measure
\begin{equation}
\mathbb{P}(d\gamma) \propto \exp(-\frac{1}{2} \sum_{\{i,j\}\in E} ||\gamma_i-\gamma_j||^2)d\gamma,
\end{equation}
where $d\gamma=d\gamma_1 d\gamma_2 \dots d\gamma_n$ denotes the Lebesgue measure on $(\mathbb{R}^{N})^{V}$.
\end{defn}

\begin{thm}
\label{Wcon1}
Given $N\geq 2$ and a connected graph $G=(V, E)$, for a fixed point $1\in V$, we consider the spin $O(N)$ model $\sigma\in (\mathbb{S}^{N-1})^{V}$ on $G$ with inverse temperature $\beta$ and $\sigma_1$ rooted to be the north point (i.e. $\sigma_1^N=1$). Let $\gamma\in (\mathbb{R}^{N-1})^{V}$ denote an $(N-1)$-component vector-valued Gaussian Free Field on $G$, with $\gamma_1$ rooted at $0$. Then, as $\beta \to \infty$, the first $(N-1)$ components of the rescaled spin field converge in law to the Gaussian Free Field:
\begin{equation}
(\sqrt{\beta}\sigma^1, \sqrt{\beta}\sigma^2, \dots \sqrt{\beta}\sigma^{N-1} ) \stackrel{L}{\longrightarrow} \gamma.
\end{equation}
\end{thm}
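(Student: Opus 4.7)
The plan is to work in local coordinates around the rooted north pole $e_N$. On the upper hemisphere, parametrize $\sigma_i = (\tau_i, \sqrt{1-|\tau_i|^2})$ with $\tau_i\in \mathbb{R}^{N-1}$, $|\tau_i|\le 1$; the uniform measure on $\mathbb{S}^{N-1}$ becomes $d\tau_i/\sqrt{1-|\tau_i|^2}$. Rescale via $\xi_i := \sqrt{\beta}\,\tau_i$. Since $\tau_1=0$ is fixed, the goal reduces to showing that the joint law of $(\xi_i)_{i\ne 1}$ converges to the GFF law with $\gamma_1=0$, which I would prove by combining pointwise convergence of densities on compact sets with a tightness estimate.

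The key local computation is a Taylor expansion of the Hamiltonian. Writing $s_i := \sqrt{1-|\tau_i|^2}$, the identities
\[
|\sigma_i-\sigma_j|^2 = |\tau_i-\tau_j|^2+(s_i-s_j)^2 \qquad \text{and} \qquad (s_i-s_j)^2 = \frac{(|\tau_j|^2-|\tau_i|^2)^2}{(s_i+s_j)^2}
\]
combine to give
\[
\beta\,|\sigma_i-\sigma_j|^2 \;=\; |\xi_i-\xi_j|^2 \;+\; \frac{(|\xi_j|^2-|\xi_i|^2)^2}{\beta\,(s_i+s_j)^2}.
\]
On any compact set $\{\max_i|\xi_i|\le R\}$ the correction is $O(1/\beta)$, and the Jacobian factor $\prod_{i\ne 1}(1-|\xi_i|^2/\beta)^{-1/2}$ tends to $1$. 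Hence, after pulling out the uniform prefactor $\beta^{-(N-1)(n-1)/2}$ (which cancels against the corresponding factor appearing in the partition function), the rescaled density converges pointwise on the compact set to the GFF density $\propto \exp\!\bigl(-\tfrac12 \sum_{\{i,j\}\in E}|\xi_i-\xi_j|^2\bigr)$, and is dominated by an integrable Gaussian.

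The main technical obstacle is tightness: showing that the rescaled measures put vanishing mass on $\{\max_i|\xi_i|>R\}$ uniformly in large $\beta$ as $R\to\infty$. I would exploit the connectedness of $G$: if some vertex $i$ satisfies $|\tau_i|\ge \epsilon$, then along a shortest path from $1$ to $i$ at least one edge $\{a,b\}$ must have $|\tau_a-\tau_b|^2\ge (\epsilon/D)^2$, where $D$ is the graph diameter. Comparison with the all-north-pole configuration then yields a uniform energy excess of order $\beta\epsilon^2$, exponentially suppressing both the lower-hemisphere contribution and any macroscopic deviation from $e_N$; on the remaining region $\{|\tau_i|\le \delta\}$, the near-Gaussian structure identified above provides the tail control on the scale $|\xi_i|\sim R$. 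Once pointwise convergence on compacts, domination, and tightness are assembled, convergence of normalizing constants follows (e.g.\ by Scheffé's lemma applied to the rescaled densities, or by a direct ratio argument for partition functions), giving convergence in law; the same quantitative inputs also feed into the convergence of moments and quantitative bounds of Theorem \ref{mom} and Theorem \ref{Wbound}.
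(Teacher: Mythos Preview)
Your proposal is correct and follows essentially the same route as the paper: both arguments parametrize the upper hemisphere by the tangent coordinates $\tau_i$ (the paper calls this the projection $\pi$ and tracks the Radon--Nikodym derivative $d\sigma/d\pi(\sigma)$ rather than your explicit Jacobian $(1-|\tau_i|^2)^{-1/2}$), rescale by $\sqrt{\beta}$, observe that the cross-term $(\sqrt{\beta-|\gamma_i|^2}-\sqrt{\beta-|\gamma_j|^2})^2$ vanishes on compacta (your identity $\frac{(|\xi_j|^2-|\xi_i|^2)^2}{\beta(s_i+s_j)^2}=O(1/\beta)$ is exactly this), and use connectedness of $G$ to obtain an energy gap $\gtrsim \beta\epsilon^2/n^2$ whenever some spin strays macroscopically from $e_N$, which kills both the lower-hemisphere and the far-tail contributions. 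The only cosmetic differences are that the paper phrases the compact-set step as convergence of integrals over boxes $\prod 1_{a_x^l<\sqrt{\beta}\sigma_x^l<b_x^l}$ and the tail step via the $N$-th coordinate $\sigma_x^N\le 1-\epsilon$ (rather than $|\tau_x|\ge\epsilon$), and then assembles weak convergence directly from (\ref{tangent}) and (\ref{upsphere}) instead of invoking Scheff\'e; your packaging via pointwise convergence of densities plus tightness is arguably tidier but carries the same content.
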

\begin{proof}
 Note that $\sigma_1= (0, 0, \dots, 1)$ and $\gamma_1=(0, 0, \dots,0)$.
Firstly, we point out that given the endpoints of intervals $a_x^{l} < b_x^{l},$ where $a_x^l,b_x^l\in \mathbb{R}$, $x\in V\setminus\{1\}$ and $l\in \{1,2,\dots, N-1\}$, we have that as $\beta \to \infty$,
\begin{equation}
\label{tangent}
\begin{aligned}
&\int_{(\sigma_2,\sigma_3, \dots, \sigma_n)\in (\mathbb{S}^{N-1})^{V\setminus\{1\}}} (\prod_{x, l } 1_{a_x^{l} < \sqrt{\beta}\sigma_x^l< b_x^{l}})
(\prod_{x} 1_{\sigma_x^N>0})e^{-\frac{1}{2}\sum\limits_{\{i, j\}\in E}\lVert\sqrt{\beta}\sigma_i-\sqrt{\beta}\sigma_j\rVert^2} d(\sqrt{\beta}\sigma_2)\dots d(\sqrt{\beta}\sigma_n) \\
\to &\int_{(\gamma_2,\gamma_3,\dots,\gamma_n) \in (\mathbb{R}^{N-1})^{V\setminus\{1\}}} (\prod_{x, l } 1_{a_x^{l} < \gamma_x^l< b_x^{l}}) e^{-\frac{1}{2}\sum\limits_{\{i, j\}\in E}\lVert\gamma_i-\gamma_j\rVert^2}d\gamma_2d\gamma_3\dots d\gamma_n.
\end{aligned}
\end{equation}
Here $d(\sqrt{\beta}\sigma_i)$ denotes the Lebesgue measure on the rescaled sphere. To prove (\ref{tangent}), we observe that although the spherical region near the north point varies with the rescaling, its projection onto the tangent space at the north point remains fixed, and that the Lebesgue measure on the spherical region converges to the Lebesgue measure on this fixed projection as the sphere is rescaled to infinite radius. Let \(\pi\) denote the projection onto the tangent space of the sphere at the north point, $d\pi(\sqrt{\beta}\sigma_i)$ denote the Lebesgue measure on this tangent space, we have that
\begin{equation}
\begin{aligned}
&\lim_{\beta\to \infty}\int_{(\sigma_2,\sigma_3, \dots, \sigma_n)\in (\mathbb{S}^{N-1})^{V\setminus\{1\}}} (\prod_{x, l } 1_{a_x^{l} < \sqrt{\beta}\sigma_x^l< b_x^{l}})
(\prod_{x} 1_{\sigma_x^N>0})e^{-\frac{1}{2}\sum\limits_{\{i, j\}\in E}\lVert\sqrt{\beta}\sigma_i-\sqrt{\beta}\sigma_j\rVert^2} \prod_{i=2}^n d(\sqrt{\beta}\sigma_i) \\
= &\lim_{\beta\to \infty} \int_{(\sigma_2,\sigma_3, \dots, \sigma_n)\in (\mathbb{S}^{N-1})^{V\setminus\{1\}}}(\prod_{x, l } 1_{a_x^{l} < \sqrt{\beta}\sigma_x^l< b_x^{l}})
(\prod_{x} 1_{\sigma_x^N>0})e^{-\frac{1}{2}\sum\limits_{\{i, j\}\in E}\lVert\sqrt{\beta}\sigma_i-\sqrt{\beta}\sigma_j\rVert^2} \prod_{i=2}^n(d\pi(\sqrt{\beta}\sigma_i))\\
= &\lim_{\beta\to \infty}\int_{\substack{(\gamma_2,\gamma_3,\dots,\gamma_n) \in \\
(\mathbb{R}^{N-1} \cap B(0,\sqrt{\beta}))^{V\setminus\{1\}}}}  (\prod_{x, l } 1_{a_x^{l} < \gamma_x^l< b_x^{l}})\cdot e^{-\frac{1}{2}\sum\limits_{\{i, j\}\in E}(\lVert\gamma_i-\gamma_j \rVert^2+(\sqrt{\beta-\lVert \gamma_i \rVert^2}-\sqrt{\beta -\lVert \gamma_j \rVert^2})^2)}\prod_{i=2}^n d\gamma_i\\
= & \int_{(\gamma_2,\gamma_3,\dots,\gamma_n) \in (\mathbb{R}^{N-1})^{V\setminus\{1\}}} (\prod_{x, l } 1_{a_x^{l} < \gamma_x^l< b_x^{l}}) e^{-\frac{1}{2}\sum\limits_{\{i, j\}\in E}\lVert\gamma_i-\gamma_j \rVert^2}d\gamma_2d\gamma_3\dots d\gamma_n.
\end{aligned}
\end{equation}
The last equality follows from the fact that $\sqrt{\beta-\lVert \gamma_i \rVert^2}-\sqrt{\beta -\lVert \gamma_j \rVert^2} \to 0$ as $\beta \to \infty$ for fixed $\gamma_i$ and $\gamma_j$.

Secondly, we want to prove that as $\beta \to \infty$
\begin{equation}
\label{upsphere}
\begin{aligned}
& \int_{(\sigma_2,\sigma_3, \dots, \sigma_n)\in (\mathbb{S}^{N-1})^{V\setminus\{1\}}} e^{-\frac{1}{2}\sum\limits_{\{i, j\}\in E}\lVert\sqrt{\beta}\sigma_i-\sqrt{\beta}\sigma_j\rVert^2} d(\sqrt{\beta}\sigma_2)\dots d(\sqrt{\beta}\sigma_n) \\
\to & \int_{(\gamma_2,\gamma_3,\dots,\gamma_n) \in (\mathbb{R}^{N-1})^{V\setminus\{1\}}} e^{-\frac{1}{2}\sum\limits_{\{i, j\}\in E}\lVert\gamma_i-\gamma_j \rVert^2}d\gamma_2d\gamma_3\dots d\gamma_n.
\end{aligned}
\end{equation}
To show this, we note that for any \(\delta > 0\), there exists \(\epsilon > 0\) such that for any point \(z = (z^1, z^2, \dots, z^N) \in \mathbb{S}^{N-1}\) with \(z^N > 1 - \epsilon\), the Radon–Nikodym derivative \(\frac{dz}{d\pi(z)}\) is bounded above by \(1 + \delta\). Similarly, \(dz\) denotes the Lebesgue measure on the sphere and \(d\pi(z)\) is the Lebesgue measure on the tangent space. Notice that
\[
\frac{d(\sqrt{\beta} z)}{d\pi(\sqrt{\beta} z)} = \frac{d z}{d \pi(z)}.
\]
Taking \( z \) to be \(\sigma_2, \sigma_3, \dots, \sigma_n\), we have

\begin{equation}
\begin{aligned}
&\int_{(\sigma_2,\sigma_3, \dots, \sigma_n)\in (\mathbb{S}^{N-1})^{V\setminus\{1\}}} 
(\prod_{x} 1_{\sigma_x^N>1-\epsilon})e^{-\frac{1}{2}\sum\limits_{\{i, j\}\in E}\lVert\sqrt{\beta}\sigma_i-\sqrt{\beta}\sigma_j\rVert^2} d(\sqrt{\beta}\sigma_2)\dots d(\sqrt{\beta}\sigma_n) \\
\leq & (1+\delta)^{n-1}\int_{(\sigma_2,\sigma_3, \dots, \sigma_n)\in (\mathbb{S}^{N-1})^{V\setminus\{1\}}} 
(\prod_{x} 1_{\sigma_x^N>1-\epsilon})e^{-\frac{1}{2}\sum\limits_{\{i, j\}\in E}\lVert\sqrt{\beta}\sigma_i-\sqrt{\beta}\sigma_j\rVert^2} d(\pi(\sqrt{\beta}\sigma_2))\dots d(\pi(\sqrt{\beta}\sigma_n)) \\
\leq & (1+\delta)^{n-1} \int_{(\gamma_2,\gamma_3,\dots,\gamma_n) \in (\mathbb{R}^{N-1}\cap B(0,\sqrt{\beta}))^{V\setminus\{1\}}}e^{-\frac{1}{2}\sum\limits_{\{i, j\}\in E}(\lVert\gamma_i-\gamma_j \rVert^2+(\sqrt{\beta-\lVert \gamma_i \rVert^2}-\sqrt{\beta -\lVert \gamma_j \rVert^2})^2)}d\gamma_2\dots d\gamma_n\\
\leq & (1+\delta)^{n-1} \int_{(\gamma_2,\gamma_3,\dots,\gamma_n) \in (\mathbb{R}^{N-1})^{V\setminus\{1\}}}e^{-\frac{1}{2}\sum\limits_{\{i, j\}\in E}\lVert\gamma_i-\gamma_j \rVert^2}d\gamma_2\dots d\gamma_n.
\end{aligned}
\end{equation}
On the other hand, we note that for any $x \in V\setminus \{1\}$, if $\sigma_x^N \leq 1-\epsilon$, then there exists $\{i,j\}\in E$ such that $|\sigma_i^N-\sigma_j^N|>\frac{1}{n} |\sigma_1^N-\sigma_x^N|=\frac{\epsilon}{n}$ as the graph $G=(V,E)$ is connected. Thus
\begin{equation}
\label{connect}
\begin{aligned}
&\int_{(\sigma_2,\sigma_3, \dots, \sigma_n)\in (\mathbb{S}^{N-1})^{V\setminus\{1\}}} 
1_{\sigma_x^N\leq 1-\epsilon} \cdot e^{-\frac{1}{2}\sum\limits_{\{i, j\}\in E}\lVert\sqrt{\beta}\sigma_i-\sqrt{\beta}\sigma_j\rVert^2} d(\sqrt{\beta}\sigma_2)\dots d(\sqrt{\beta}\sigma_n) \\
\leq &\int_{(\sigma_2,\sigma_3, \dots, \sigma_n)\in (\mathbb{S}^{N-1})^{V\setminus\{1\}}} 
 e^{-\frac{\beta\epsilon^2}{2n^2}} d(\sqrt{\beta}\sigma_2)\dots d(\sqrt{\beta}\sigma_n)\\
 =&\exp(-\frac{\beta\epsilon^2}{2n^2}) \int_{(\sigma_2,\sigma_3, \dots, \sigma_n)\in (\mathbb{S}^{N-1})^{V\setminus\{1\}}} 
  d(\sqrt{\beta}\sigma_2)\dots d(\sqrt{\beta}\sigma_n),
\end{aligned}
\end{equation}
which naturally converges to \(0\) since the surface area of the rescaled sphere grows polynomially in \(\beta\), whereas \(\exp\left(-\frac{\beta \epsilon^2}{2 n^2}\right)\) decays exponentially in \(\beta\). Combining this with the fact that
\[
\prod_{x \in V \setminus \{1\}} 1_{\sigma_x^N > 1 - \epsilon} + \sum_{x \in V \setminus \{1\}} 1_{\sigma_x^N \leq 1 - \epsilon} \geq 1,
\]
we conclude that for any \(\delta > 0\), 

\begin{equation}
\begin{aligned}
&\limsup\limits_{\beta\to \infty} \int_{(\sigma_2,\sigma_3, \dots, \sigma_n)\in (\mathbb{S}^{N-1})^{V\setminus\{1\}}} e^{-\frac{1}{2}\sum\limits_{\{i, j\}\in E}\lVert\sqrt{\beta}\sigma_i-\sqrt{\beta}\sigma_j\rVert^2} d(\sqrt{\beta}\sigma_2)\dots d(\sqrt{\beta}\sigma_n) \\
\leq &  \limsup\limits_{\beta\to \infty}\int_{(\sigma_2,\sigma_3, \dots, \sigma_n)\in (\mathbb{S}^{N-1})^{V\setminus\{1\}}}(\prod\limits_{x\in V\setminus\{1\}} 1_{\sigma_x^N>1-\epsilon}) e^{-\frac{1}{2}\sum\limits_{\{i, j\}\in E}\lVert\sqrt{\beta}\sigma_i-\sqrt{\beta}\sigma_j\rVert^2} \prod_{i=2}^n d(\sqrt{\beta}\sigma_i) \\
&+\limsup\limits_{\beta\to \infty}\int_{(\sigma_2,\sigma_3, \dots, \sigma_n)\in (\mathbb{S}^{N-1})^{V\setminus\{1\}}}(\sum\limits_{x\in V\setminus\{1\}}1_{\sigma_x^N\leq 1-\epsilon}) e^{-\frac{1}{2}\sum\limits_{\{i, j\}\in E}\lVert\sqrt{\beta}\sigma_i-\sqrt{\beta}\sigma_j\rVert^2} \prod_{i=2}^n d(\sqrt{\beta}\sigma_i)\\
\leq & (1+\delta)^{n-1} \int_{(\gamma_2,\gamma_3,\dots,\gamma_n) \in (\mathbb{R}^{N-1})^{V\setminus\{1\}}}e^{-\frac{1}{2}\sum\limits_{\{i, j\}\in E}\lVert\gamma_i-\gamma_j \rVert^2}d\gamma_2\dots d\gamma_n + (n-1)\cdot 0 \\
= & (1+\delta)^{n-1} \int_{(\gamma_2,\gamma_3,\dots,\gamma_n) \in (\mathbb{R}^{N-1})^{V\setminus\{1\}}}e^{-\frac{1}{2}\sum\limits_{\{i, j\}\in E}\lVert\gamma_i-\gamma_j \rVert^2}d\gamma_2\dots d\gamma_n.
\end{aligned}
\end{equation}
Furthermore, from (\ref{tangent}) we know that
\begin{equation}
\begin{aligned}
&\liminf\limits_{\beta\to \infty} \int_{(\sigma_2,\sigma_3, \dots, \sigma_n)\in (\mathbb{S}^{N-1})^{V\setminus\{1\}}} e^{-\frac{1}{2}\sum\limits_{\{i, j\}\in E}\lVert\sqrt{\beta}\sigma_i-\sqrt{\beta}\sigma_j\rVert^2} d(\sqrt{\beta}\sigma_2)\dots d(\sqrt{\beta}\sigma_n) \\
\geq & \int_{(\gamma_2,\gamma_3,\dots,\gamma_n) \in (\mathbb{R}^{N-1})^{V\setminus\{1\}}}e^{-\frac{1}{2}\sum\limits_{\{i, j\}\in E}\lVert\gamma_i-\gamma_j \rVert^2}d\gamma_2\dots d\gamma_n,
\end{aligned}
\end{equation}
thus we have already proved (\ref{upsphere}). From (\ref{tangent}) and (\ref{upsphere}), it is straightforward to see the weak convergence.
\end{proof}

We now turn our attention to theorems concerning the convergence of moments and to obtaining rough quantitative estimates of the corresponding convergence rates.
\begin{thm}
\label{mom}
Given $N\geq 1$ and a connected graph $G=(V, E)$, for any $x,y\in V$ we have that as $\beta \to \infty$,
\begin{equation}
\beta \cdot \mathbb{E}_{\mu_{G,N,\beta}} \lVert \sigma_x - \sigma_y \rVert^2 \to (N-1)\frac{u_{xy}(x)}{d(x)}.
\end{equation}
\end{thm}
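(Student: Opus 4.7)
The plan is to combine Theorem~\ref{Wcon1} with a uniform integrability argument and an explicit computation of the limiting Gaussian free field covariance. First, by the $O(N)$-invariance of both $\mu_{G,N,\beta}$ and the functional $\lVert\sigma_x-\sigma_y\rVert^2$, I may pin $\sigma_y$ to the north pole $(0,\dots,0,1)$ of $\mathbb{S}^{N-1}$ without changing the left-hand side. After relabeling so $y$ plays the role of the rooted vertex in Theorem~\ref{Wcon1}, that theorem yields
\[
(\sqrt{\beta}\sigma_x^1,\dots,\sqrt{\beta}\sigma_x^{N-1})\stackrel{L}{\longrightarrow}\gamma_x,
\]
where $\gamma$ is the $(N-1)$-component vector-valued GFF on $G$ pinned to $0$ at $y$.

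Next, writing $\tilde\sigma_x=(\sigma_x^1,\dots,\sigma_x^{N-1})$ and using $\sigma_x^N=\sqrt{1-\lVert\tilde\sigma_x\rVert^2}$ on the upper hemisphere,
\[
\beta\lVert\sigma_x-\sigma_y\rVert^2=\lVert\sqrt{\beta}\,\tilde\sigma_x\rVert^2+\beta\bigl(1-\sqrt{1-\lVert\tilde\sigma_x\rVert^2}\bigr)^2=\lVert\sqrt{\beta}\,\tilde\sigma_x\rVert^2+O\bigl(\lVert\sqrt{\beta}\,\tilde\sigma_x\rVert^4/\beta\bigr).
\]
The candidate limit is therefore $\mathbb{E}\lVert\gamma_x\rVert^2=(N-1)\operatorname{Var}(\gamma_x^1)$. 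The density $\exp(-\tfrac12\sum_{\{i,j\}\in E}(\gamma_i^1-\gamma_j^1)^2)$ has precision matrix equal to the graph Laplacian $L=D-A$, so conditioning on $\gamma_y^1=0$ gives a covariance matrix equal to the inverse of $L$ restricted to $V\setminus\{y\}$. Lemma~\ref{deltalem} reads $L u_{xy}=d(x)(\delta_x-\delta_y)$, and combined with $u_{xy}(y)=0$ this identifies $u_{xy}/d(x)$ on $V\setminus\{y\}$ with the column of that inverse at $x$. Evaluating at $x$ gives $\operatorname{Var}(\gamma_x^1)=u_{xy}(x)/d(x)$, matching the target.

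The main technical hurdle is upgrading weak convergence to second-moment convergence: the pointwise bound $\beta\lVert\sigma_x-\sigma_y\rVert^2\leq 4\beta$ diverges, so uniform integrability is required. I would split the expectation at a small fixed $\epsilon>0$. On the good event $\{\sigma_z^N>1-\epsilon\text{ for every }z\in V\}$, the sphere-to-tangent-plane projection is bi-Lipschitz and the Hamiltonian is well approximated by the GFF energy, so Gaussian concentration gives uniform tail bounds on $\lVert\sqrt{\beta}\,\tilde\sigma_x\rVert^2$ (and hence on the $O(\lVert\sqrt{\beta}\,\tilde\sigma_x\rVert^4/\beta)$ remainder) that allow passage to the limit. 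On the complementary bad event, the connectedness estimate from the proof of Theorem~\ref{Wcon1} (see (\ref{connect})) bounds the unnormalized weight by $e^{-c\beta}$ times a polynomial in $\beta$; combined with the partition function asymptotic~(\ref{upsphere}), this makes the bad event's contribution to $\beta\cdot\mathbb{E}_{\mu_{G,N,\beta}}\lVert\sigma_x-\sigma_y\rVert^2$ decay exponentially in $\beta$. Putting these bounds together upgrades weak convergence to second-moment convergence and yields the desired limit $(N-1)u_{xy}(x)/d(x)$.
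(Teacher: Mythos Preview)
Your overall strategy coincides with the paper's: pin $\sigma_y$ to the north pole, decompose $\beta\lVert\sigma_x-\sigma_y\rVert^2$ into the tangential piece $\sum_{s=1}^{N-1}(\sqrt{\beta}\sigma_x^s)^2$ plus a remainder $\beta(1-\sigma_x^N)^2$, identify the limit of the first piece as $(N-1)\operatorname{Var}(\gamma_x^1)=(N-1)u_{xy}(x)/d(x)$, and kill the remainder and the bad event using the exponential estimate~(\ref{connect}). Your computation of $\operatorname{Var}(\gamma_x^1)$ via Lemma~\ref{deltalem} is also exactly what the paper uses.

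There are two genuine gaps. First, you never treat $N=1$: Theorem~\ref{Wcon1} is stated only for $N\geq 2$, the tangent-space picture is vacuous when the sphere is two points, and the right-hand side of the theorem is then $0$, which needs its own argument. The paper handles this separately by the crude bound $\beta\,\mathbb{E}\lVert\sigma_x-\sigma_y\rVert^2\leq 4\beta\,\mu_{G,1,\beta}(\exists\{i,j\}\in E:\sigma_i\neq\sigma_j)\leq 4\beta\cdot 2^n e^{-2\beta}\to 0$.

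Second, for $N\geq 2$ your uniform integrability step is the heart of the matter and is not really supplied. Saying ``Gaussian concentration gives uniform tail bounds'' on the good event begs the question: the conditioned measure is not Gaussian, and one would need to show the approximation by the GFF energy is good enough to inherit uniform-in-$\beta$ sub-Gaussian tails for $\sqrt{\beta}\,\tilde\sigma_x$. The paper sidesteps this by proving directly that for every $k\in\mathbb{N}^*$ and every $s$,
\[
\mathbb{E}_{\mu^r_{G,N,\beta}}(\sqrt{\beta}\sigma_x^s)^{2k}\longrightarrow\mathbb{E}(\gamma_x^s)^{2k},
\]
simply by rerunning the argument of Theorem~\ref{Wcon1} with the extra polynomial factor $(\sqrt{\beta}\sigma_x^s)^{2k}$ inside the integrals (the bad-event bound~(\ref{connect}) absorbs an additional $\beta^k$). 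Taking $k=1$ gives the main term; taking $k=2$ and using $\beta(1-\sigma_x^N)^2\leq\beta(1-(\sigma_x^N)^2)^2=\beta(\sum_s(\sigma_x^s)^2)^2$ controls the remainder, since $\beta\,\mathbb{E}(\sigma_x^1)^4=\beta^{-1}\mathbb{E}(\sqrt{\beta}\sigma_x^1)^4\to 0$. This moment-convergence route is what you should use in place of the unproven concentration claim.
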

\begin{proof}
We first focus on the case where $N>1$. It would be enough to prove the case for $y=1$ and $x=2$.  Note that the moments remain the same if we root $\sigma_1$ to be the north point $(0,\dots, 0, 1)$. Thus if we denote the law of the rooted spin $O(N)$ model by $\mu_{G, N,\beta}^r$, we have that

\begin{equation}
\label{sum}
\begin{aligned}
   \beta \cdot \mathbb{E}_{\mu_{G,N,\beta}} \lVert\sigma_1 - \sigma_2 \rVert^2 =   &   \beta \cdot \mathbb{E}_{\mu^r_{G,N,\beta}} \lVert\sigma_1 - \sigma_2 \rVert^2\\
=   &   \mathbb{E}_{\mu^r_{G,N,\beta}} \sum_{s=1}^{N-1}(\sqrt{\beta}\sigma_2^s)^2 + \beta \mathbb{E}_{\mu^r_{G,N,\beta}}(1- \sigma_2^N)^2.
\end{aligned}
\end{equation}
Recall that $\gamma\in (\mathbb{R}^{N-1})^{V}$ is an $(N-1)$-component vector-valued Gaussian Free Field on $G$, with $\gamma_1$ rooted at $0$. Similar to (\ref{tangent}) we have that for any $s\in \{1, 2, \dots, N-1\}$ and $k\in \mathbb{N}^*$
\begin{equation}
\begin{aligned}
\int_{(\sigma_2,\sigma_3, \dots, \sigma_n)\in (\mathbb{S}^{N-1})^{V\setminus\{1\}}}& (\prod_{x, l } 1_{a_x^{l} < \sqrt{\beta}\sigma_x^l< b_x^{l}})
(\prod_{x} 1_{\sigma_x^N>0})(\sqrt{\beta}\sigma_2^s)^{2k} \\
& \cdot e^{-\frac{1}{2}\sum\limits_{\{i, j\}\in E}\lVert\sqrt{\beta}\sigma_i-\sqrt{\beta}\sigma_j\rVert^2} d(\sqrt{\beta}\sigma_2)\dots d(\sqrt{\beta}\sigma_n) \\
\to \int_{(\gamma_2,\gamma_3,\dots,\gamma_n) \in (\mathbb{R}^{N-1})^{V\setminus\{1\}}} & (\prod_{x, l } 1_{a_x^{l} < \gamma_x^l< b_x^{l}}) (\gamma_2^s)^{2k} \cdot e^{-\frac{1}{2}\sum\limits_{\{i, j\}\in E}\lVert\gamma_i-\gamma_j \rVert^2}d\gamma_2d\gamma_3\dots d\gamma_n,
\end{aligned}
\end{equation}
On the other hand, we could also use the same technique of (\ref{upsphere}) to show that for any $\delta>0$, there exists $\epsilon>0$ such that
\begin{equation}
\begin{aligned}
&\int_{(\sigma_2,\sigma_3, \dots, \sigma_n)\in (\mathbb{S}^{N-1})^{V\setminus\{1\}}} 
(\prod_{x} 1_{\sigma_x^N>1-\epsilon})(\sqrt{\beta}\sigma_2^s)^{2k} \cdot e^{-\frac{1}{2}\sum\limits_{\{i, j\}\in E}\lVert\sqrt{\beta}\sigma_i-\sqrt{\beta}\sigma_j\rVert^2} d(\sqrt{\beta}\sigma_2)\dots d(\sqrt{\beta}\sigma_n) \\
\leq & (1+\delta)^{n-1} \int_{(\gamma_2,\gamma_3,\dots,\gamma_n) \in (\mathbb{R}^{N-1})^{V\setminus\{1\}}}(\gamma_2^s)^{2k} \cdot e^{-\frac{1}{2}\sum\limits_{\{i, j\}\in E}\lVert\gamma_i-\gamma_j \rVert^2}d\gamma_2\dots d\gamma_n.
\end{aligned}
\end{equation}
and 
\begin{equation}
\begin{aligned}
&\int_{(\sigma_2,\sigma_3, \dots, \sigma_n)\in (\mathbb{S}^{N-1})^{V\setminus\{1\}}} 
1_{\sigma_x^N\leq 1-\epsilon} \cdot (\sqrt{\beta}\sigma_2^s)^{2k} \cdot e^{-\frac{1}{2}\sum\limits_{\{i, j\}\in E}\lVert\sqrt{\beta}\sigma_i-\sqrt{\beta}\sigma_j\rVert^2} d(\sqrt{\beta}\sigma_2)\dots d(\sqrt{\beta}\sigma_n) \\
\leq &\int_{(\sigma_2,\sigma_3, \dots, \sigma_n)\in (\mathbb{S}^{N-1})^{V\setminus\{1\}}} 
 \beta^k \exp(-\frac{\beta\epsilon^2}{2n^2}) d(\sqrt{\beta}\sigma_2)\dots d(\sqrt{\beta}\sigma_n),
\end{aligned}
\end{equation}
which converges to $0$. Thus as (\ref{upsphere}) we have that
\begin{equation}
\begin{aligned}
& \int_{(\sigma_2,\sigma_3, \dots, \sigma_n)\in (\mathbb{S}^{N-1})^{V\setminus\{1\}}} (\sqrt{\beta}\sigma_2^s)^{2k} \cdot e^{-\frac{1}{2}\sum\limits_{\{i, j\}\in E}\lVert\sqrt{\beta}\sigma_i-\sqrt{\beta}\sigma_j\rVert^2} d(\sqrt{\beta}\sigma_2)\dots d(\sqrt{\beta}\sigma_n) \\
\to & \int_{(\gamma_2,\gamma_3,\dots,\gamma_n) \in (\mathbb{R}^{N-1})^{V\setminus\{1\}}} (\gamma_2^s)^{2k}  \cdot e^{-\frac{1}{2}\sum\limits_{\{i, j\}\in E}\lVert\gamma_i-\gamma_j \rVert^2}d\gamma_2d\gamma_3\dots d\gamma_n.
\end{aligned}
\end{equation}
Combine it with (\ref{upsphere}) we have 
\begin{equation}
\label{moment}
\mathbb{E}_{\mu^r_{G,N,\beta}} (\sqrt{\beta}\sigma_2^s)^{2k} \to \mathbb{E}(\gamma_2^s)^{2k}.
\end{equation}
Take $k=1$ in (\ref{moment}) we know that $\mathbb{E}_{\mu^r_{G,N,\beta}} (\sqrt{\beta}\sigma_2^s)^2 \to \mathbb{E}(\gamma_2^s)^2 = \frac{u_{xy}(x)}{d(x)}$. The last inequality follows from, e.g., Definition 1.36 and the remarks below the definition in \cite{9}.
By (\ref{sum}), it remains to show that $\beta \mathbb{E}_{\mu^r_{G,N,\beta}}(1- \sigma_2^N)^2\to 0$. Similar to (\ref{connect}) we have 
\begin{equation}
\begin{aligned}
&\int_{(\sigma_2,\sigma_3, \dots, \sigma_n)\in (\mathbb{S}^{N-1})^{V\setminus\{1\}}} 
1_{\sigma_2^N \leq 0} \cdot \beta (1- \sigma_2^N)^2 \cdot e^{-\frac{1}{2}\sum\limits_{\{i, j\}\in E}\lVert\sqrt{\beta}\sigma_i-\sqrt{\beta}\sigma_j\rVert^2} d(\sqrt{\beta}\sigma_2)\dots d(\sqrt{\beta}\sigma_n) \\
\leq &\int_{(\sigma_2,\sigma_3, \dots, \sigma_n)\in (\mathbb{S}^{N-1})^{V\setminus\{1\}}} 
 4\beta \exp(-\frac{\beta}{2n^2}) d(\sqrt{\beta}\sigma_2)\dots d(\sqrt{\beta}\sigma_n),
\end{aligned}
\end{equation}
which converges to $0$. Thus it remains to prove that $\beta \mathbb{E}_{\mu^r_{G,N,\beta}}1_{\sigma_2^N>0} \cdot (1- \sigma_2^N)^2\to 0$. Note that
\begin{equation}
\begin{aligned}
\beta \mathbb{E}_{\mu^r_{G,N,\beta}}1_{\sigma_2^N>0} \cdot (1- \sigma_2^N)^2 & \leq \beta \mathbb{E}_{\mu^r_{G,N,\beta}}1_{\sigma_2^N>0} \cdot (1- (\sigma_2^N)^2)^2 \\
& \leq \beta \mathbb{E}_{\mu^r_{G,N,\beta}} \cdot (1- (\sigma_2^N)^2)^2\\
& = \beta \mathbb{E}_{\mu^r_{G,N,\beta}} \cdot (\sum_{s=1}^{N-1}(\sigma_2^s)^2)^2\\
& = (N-1)^2 \beta \mathbb{E}_{\mu^r_{G,N,\beta}} \cdot (\sigma_2^1)^4.
\end{aligned}
\end{equation}
On the other hand, take $k=2$ in (\ref{moment}) we have $\mathbb{E}_{\mu^r_{G,N,\beta}} (\sqrt{\beta}\sigma_2^1)^4 \to \mathbb{E}(\gamma_2^1)^4<+\infty$, thus 
\begin{equation}
\beta \mathbb{E}_{\mu^r_{G,N,\beta}} (\sigma_2^1)^4 \to 0,
\end{equation}
and this completes the proof when $N>1$.

For $N=1$ (Ising model), define an event $A:= \{\sigma_i= \sigma_j \text{ for all } i, j \in V\}$, we have that
\begin{equation}
\beta \cdot \mathbb{E}_{\mu_{G,N,\beta}} \lVert \sigma_x - \sigma_y \rVert^2 \leq  4\beta \mu_{G,N,\beta}(A^c).
\end{equation}
Recall that $d\sigma_i= \frac{1}{2}\delta_{-1}+ \frac{1}{2}\delta_1$ ($i\in V$) in the definition (\ref{def}) of Ising model. We note that
\begin{equation}
\sum\limits_{\sigma\in \{-1,1\}^V}1_{A^c}\cdot e^{-\frac{\beta}{2}\sum\limits_{\{i, j\}\in E}(\sigma_i - \sigma_j)^2} \leq \sum\limits_{\sigma\in \{-1,1\}^V}1_{A^c}\cdot e^{-2\beta} \leq 2^n e^{-2\beta},
\end{equation}
and
\begin{equation}
\sum\limits_{\sigma\in \{-1,1\}^V}e^{-\frac{\beta}{2}\sum\limits_{\{i, j\}\in E}(\sigma_i - \sigma_j)^2} \geq \sum\limits_{\sigma\in \{-1,1\}^V}1_{A}\cdot e^{-\frac{\beta}{2}\sum\limits_{\{i, j\}\in E}(\sigma_i - \sigma_j)^2}\geq 1.
\end{equation}
Thus $4\beta \mu_{G,N,\beta}(A^c) \leq 4\beta \cdot 2^n e^{-2\beta}$, which converges to $0$ as $\beta \to \infty$. 
\end{proof}
\begin{rem}
\label{rootrem}
By a similar argument, for $N\geq 2$, we have that
\begin{equation}
\beta \cdot \mathbb{E}_{\mu_{G,N,\beta}} \lVert \sigma_x - \sigma_y \rVert^2 = \beta \cdot \mathbb{E}_{\mu_{G,N,\beta}}^r \lVert \sigma_x - \sigma_y \rVert^2 \to (N-1)\mathbb{E}(\gamma_x^1-\gamma_y^1)^2=(N-1)(\frac{u_{x1}(x)}{d(x)}+\frac{u_{y1}(y)}{d(y)}-2\frac{u_{x1}(y)}{d(x)}).
\end{equation}
(This equation also works for the case $N=1$.)

Of course, we could further conclude that for a general connected graph $G$ and $x,y,z\in G$:
\begin{equation}
\frac{u_{xy}(x)}{d(x)}= \frac{u_{xz}(x)}{d(x)}+\frac{u_{yz}(y)}{d(y)}-2\frac{u_{xz}(y)}{d(x)},
\end{equation}
and this identity can also be verified by more elementary approaches.
\end{rem}
\begin{thm}
\label{Wbound}
Given $N\geq 2$ and $\delta>0$, there exists a constant $c=c(N, \delta)>0$, such that for any connected graph $G=(V, E)$ where $|V|=n$ and $|E|=m$ we have $\mathbb{E}_{\mu_{G,N,\beta}} \sigma_x \cdot \sigma_y>1-\delta$ for $\beta>c(n^2 \log m+ n m)$ and $x,y \in V$.
\end{thm}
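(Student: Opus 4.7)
The plan is to make the Gaussian-free-field approximation from Theorems~\ref{Wcon1} and~\ref{mom} quantitative in the graph parameters $n$ and $m$. Using $O(N)$-invariance I first root $\sigma_1$ at the north pole, so that $\mathbb{E}_{\mu_{G,N,\beta}}\|\sigma_x-\sigma_y\|^2 = \mathbb{E}_{\mu^r_{G,N,\beta}}\|\sigma_x-\sigma_y\|^2$ with $\mu^r_{G,N,\beta}$ the rooted measure introduced in the proof of Theorem~\ref{mom}. Fix a small $\epsilon=\epsilon(\delta,N)>0$ and the good event $A_\epsilon = \{\sigma_i^N>1-\epsilon \text{ for all } i \in V\}$, and split
\begin{equation*}
\mathbb{E}_{\mu^r_{G,N,\beta}}\|\sigma_x-\sigma_y\|^2 \leq \mathbb{E}_{\mu^r_{G,N,\beta}}[\|\sigma_x-\sigma_y\|^2 \cdot 1_{A_\epsilon}] + 4\,\mathbb{P}_{\mu^r_{G,N,\beta}}(A_\epsilon^c),
\end{equation*}
handling the two pieces separately.

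For the bad event I adapt the connectedness argument from~\eqref{connect}: if some $\sigma_x^N \leq 1-\epsilon$, then Cauchy–Schwarz along the shortest path from $1$ to $x$ (of length $\leq n-1$) gives $\sum_{\{i,j\}\in E}\|\sigma_i-\sigma_j\|^2 \geq \epsilon^2/(n-1)$, and after a union bound over $x\in V$,
\begin{equation*}
\int_{A_\epsilon^c}\exp\!\left(-\frac{\beta}{2}\sum_{\{i,j\}\in E}\|\sigma_i-\sigma_j\|^2\right)d\sigma \leq n\,C_N^{n-1}\exp\!\left(-\frac{\beta\epsilon^2}{2(n-1)}\right).
\end{equation*}
For the denominator $Z^*$, I restrict to a tangent neighborhood of the north pole, rescale $\gamma_i=\sqrt{\beta}\,\tilde\sigma_i$, and use the GFF comparison from the proof of Theorem~\ref{Wcon1}. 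Once $\beta$ is large enough that $\sqrt{\beta}\,\epsilon$ exceeds the typical GFF scale $O(\sqrt{n})$, this gives $Z^* \geq c_1\beta^{-(N-1)(n-1)/2}(\det\Delta')^{-(N-1)/2}$, where $\Delta'$ is the Laplacian restricted to $V\setminus\{1\}$. By the matrix-tree theorem $\det\Delta'=\tau(G)$, and AM–GM on the nonzero eigenvalues of $\Delta'$ (summing to $2m$) gives $\tau(G)\leq(2m/(n-1))^{n-1}$. Dividing,
\begin{equation*}
\mathbb{P}_{\mu^r_{G,N,\beta}}(A_\epsilon^c) \leq C_N^{\,n}\left(\frac{\beta m}{n}\right)^{(N-1)(n-1)/2}\exp\!\left(-\frac{\beta\epsilon^2}{2(n-1)}\right),
\end{equation*}
which drops below $\delta/8$ for $\beta \gtrsim n^2\log m$ after balancing the polynomial factor against the exponential.

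On the good event, in the tangent parametrization $\sigma_i=(\tilde\sigma_i,\sqrt{1-\|\tilde\sigma_i\|^2})$ with $\gamma_i=\sqrt{\beta}\,\tilde\sigma_i$, the dominant correction to the GFF comparison is
\begin{equation*}
\|\sigma_i-\sigma_j\|^2-\|\tilde\sigma_i-\tilde\sigma_j\|^2 = \bigl(\sqrt{1-\|\tilde\sigma_i\|^2}-\sqrt{1-\|\tilde\sigma_j\|^2}\bigr)^2 \approx \frac{(\|\gamma_i\|^2-\|\gamma_j\|^2)^2}{4\beta^2}.
\end{equation*}
After multiplication by $\beta/2$ and summation over edges, the expected correction to the exponent under the GFF is of order $(1/\beta)\sum_{\{i,j\}\in E}\mathbb{E}(\|\gamma_i\|^2-\|\gamma_j\|^2)^2$. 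Using $|\|\gamma_i\|^2-\|\gamma_j\|^2|\leq\|\gamma_i-\gamma_j\|\,(\|\gamma_i\|+\|\gamma_j\|)$ together with the GFF bounds $\mathbb{E}\|\gamma_i\|^2\leq(N-1)(n-1)$ and $\mathbb{E}\|\gamma_i-\gamma_j\|^2=O(1)$ for adjacent $i,j$, this correction is of order $nm/\beta$, controlled precisely when $\beta\gtrsim nm$. Under this condition the quantitative GFF comparison yields
\begin{equation*}
\mathbb{E}_{\mu^r_{G,N,\beta}}[\|\sigma_x-\sigma_y\|^2\cdot 1_{A_\epsilon}] \leq (1+o(1))\,\frac{(N-1)\,u_{xy}(x)}{\beta\, d(x)} \leq \frac{C(N)\,n}{\beta},
\end{equation*}
which is below $\delta/2$ for $\beta\gtrsim n/\delta$. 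Combining the two regimes gives the threshold $\beta > c(N,\delta)(n^2\log m+nm)$. The main technical obstacle is handling the two error sources simultaneously: the matrix-tree bound on $\det\Delta'$ is essential for the bad-event estimate to avoid exponential-in-$n$ losses in the partition-function comparison, while the quartic correction on the good event accumulates across all $m$ edges without cancellation on general graphs, producing the $nm$ contribution to the threshold.
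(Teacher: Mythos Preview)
Your good/bad decomposition is the right structure, but you take a much harder route than the paper and leave its central step unjustified. The paper's key observation, which you miss, is that the good event can be taken on a \emph{single} vertex and then the bound is automatic: with $y=1$ rooted at the north pole, set the good event to be $\{\sigma_x^N>1-\epsilon\}$ for the vertex $x$ only; since $\|\sigma_1-\sigma_x\|^2=2-2\sigma_x^N$, the choice $\epsilon=\epsilon(\delta)$ gives $\mathbb{E}_{\mu^r}[\|\sigma_1-\sigma_x\|^2\,1_{\sigma_x^N>1-\epsilon}]<\delta$ immediately, with no GFF comparison at all. Your good-event machinery (the quartic correction $\tfrac{1}{\beta}\sum_{\{i,j\}}\mathbb{E}(\|\gamma_i\|^2-\|\gamma_j\|^2)^2$, the Green's-function bound, etc.) is therefore unnecessary. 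Moreover, your claimed conclusion $\mathbb{E}_{\mu^r}[\|\sigma_x-\sigma_y\|^2\,1_{A_\epsilon}]\le(1+o(1))(N-1)u_{xy}(x)/(\beta d(x))$ is not actually established: computing the \emph{expected} perturbation of the exponent under the GFF does not by itself control the ratio of weighted to unweighted integrals --- that step would need a genuine change-of-measure or concentration argument which you do not supply.

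For the bad event and the lower bound on the partition function, the paper again stays crude: it restricts every spin to the cap $\mathbb{S}^{N-1}\cap B(\sigma_1,\tfrac{1}{2\sqrt{\beta}})$, on which $\sum_{\{i,j\}}\|\sqrt{\beta}\sigma_i-\sqrt{\beta}\sigma_j\|^2\le m$, giving the one-line bound $Z^*\ge C_2^{\,n-1}e^{-m/2}$. The $nm$ in the threshold then comes from beating $e^{C_0 m}$ with $e^{-\beta/(C_0 n)}$, and $n^2\log m$ from beating $\beta^{C_0 n}$. Your matrix-tree route $\tau(G)\le(2m/(n-1))^{n-1}$ is sharper in principle, but note that dropping the $N$-th-coordinate correction $(\sqrt{1-\|\tilde\sigma_i\|^2}-\sqrt{1-\|\tilde\sigma_j\|^2})^2\ge 0$ yields an \emph{upper} bound on the integrand, so to get a \emph{lower} bound on $Z^*$ you still owe a control of this correction on the restricted region --- the same quartic term reappears. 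Even if you patch this (it can be done with more care), it only relocates the origin of the $nm$ term from the partition-function bound to your good-event analysis, with no improvement in the final threshold.
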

\begin{proof}
As before, we take $x=2$ and $y=1$, and we consider the spin $O(N)$ model with $\sigma_1$ rooted to be the north point $(0, \dots, 0, 1)$. Note that there exists $\epsilon>0$ such that for any $\sigma_2^N > 1-\epsilon$ we have $(\sigma_1-\sigma_2)^2<\delta$, thus
\begin{equation}
\mathbb{E}_{\mu_{G,N,\beta}^r} 1_{\sigma_2^N> 1-\epsilon} \cdot \lVert\sigma_1 - \sigma_2 \rVert^2 <\delta.
\end{equation}
On the other hand, let $\chi_0=1, \chi_1, \dots, \chi_{p-1}, \chi_p=2$  be the shortest the path on $G$ from $1$ to $2$, so that $p\leq n$. If $\sigma_2^N\leq 1-\epsilon$, by Cauchy–Schwarz inequality we have
\begin{equation}
\begin{aligned}
e^{-\frac{1}{2}\sum\limits_{\{i, j\}\in E}\lVert\sqrt{\beta}\sigma_i-\sqrt{\beta}\sigma_j\rVert^2}\leq & e^{-\frac{\beta}{2}\sum\limits_{i=1}^{p} (\sigma_{\chi_i}^N-\sigma_{\chi_{i-1}}^N)^2}\\
\leq & e^{-\frac{\beta}{2p}(\sum\limits_{i=1}^{p} (\sigma_{\chi_i}^N-\sigma_{\chi_{i-1}}^N))^2}\\
\leq &\exp(-\frac{\beta\epsilon^2}{2n}).
\end{aligned}
\end{equation}
Thus
\begin{equation}
\label{top}
\begin{aligned}
&\int_{(\sigma_2,\sigma_3, \dots, \sigma_n)\in (\mathbb{S}^{N-1})^{V\setminus\{1\}}} 
1_{\sigma_2^N\leq 1-\epsilon} \cdot \lVert\sigma_1 - \sigma_2 \rVert^2 \cdot e^{-\frac{1}{2}\sum\limits_{\{i, j\}\in E}\lVert\sqrt{\beta}\sigma_i-\sqrt{\beta}\sigma_j\rVert^2} d(\sqrt{\beta}\sigma_2)\dots d(\sqrt{\beta}\sigma_n) \\
\leq &\int_{(\sigma_2,\sigma_3, \dots, \sigma_n)\in (\mathbb{S}^{N-1})^{V\setminus\{1\}}} 
 4 \exp(-\frac{\beta\epsilon^2}{2n}) d(\sqrt{\beta}\sigma_2)\dots d(\sqrt{\beta}\sigma_n)\\
 = &  4(C_1\beta^{\frac{N-1}{2}})^{n-1}\cdot \exp(-\frac{\beta\epsilon^2}{2n}),
\end{aligned}
\end{equation}
where $C_1$ is a constant related to the surface area of spheres. Furthermore,
\begin{equation}
\label{bottom}
\begin{aligned}
&\int_{(\sigma_2,\sigma_3, \dots, \sigma_n)\in (\mathbb{S}^{N-1})^{V\setminus\{1\}}} 
 e^{-\frac{1}{2}\sum\limits_{\{i, j\}\in E}\lVert\sqrt{\beta}\sigma_i-\sqrt{\beta}\sigma_j\rVert^2} d(\sqrt{\beta}\sigma_2)\dots d(\sqrt{\beta}\sigma_n) \\
\geq &\int_{(\sigma_2,\sigma_3, \dots, \sigma_n)\in (\mathbb{S}^{N-1}\cap B(\sigma_1, \frac{1}{2\sqrt{\beta}}))^{V\setminus\{1\}}}
  e^{-\frac{1}{2}\sum\limits_{\{i, j\}\in E}\lVert\sqrt{\beta}\sigma_i-\sqrt{\beta}\sigma_j\rVert^2} d(\sqrt{\beta}\sigma_2)\dots d(\sqrt{\beta}\sigma_n)\\
\geq &  \int_{(\sigma_2,\sigma_3, \dots, \sigma_n)\in (\mathbb{S}^{N-1}\cap B(\sigma_1, \frac{1}{2\sqrt{\beta}}))^{V\setminus\{1\}}}
  e^{-\frac{1}{2} m} d(\sqrt{\beta}\sigma_2)\dots d(\sqrt{\beta}\sigma_n)\\
\geq & C_2^{n-1} \cdot \exp(-\frac{1}{2}m),
\end{aligned}
\end{equation}
where $C_2$ is a constant related to a lower bound of the surface area of $\sqrt{\beta}\mathbb{S}^{N-1}\cap B(\sqrt{\beta}\sigma_1, \frac{1}{2})$. We take $c=c(N,d)$ large enough to ensure that there exists such a lower bound as the area of $\sqrt{\beta}\mathbb{S}^{N-1}\cap B(\sqrt{\beta}\sigma_1, \frac{1}{2})$ naturally converges to the area of a (N-1)-dimensional disc when $\beta\to \infty$.

Recall that $\mu_{G,N,\beta}^r$ is the measure of the rooted spin $O(N)$ model. From (\ref{top}) and (\ref{bottom}) we conclude that there is $C_0=C_0(N,\delta)>0$ large enough such that
\begin{equation}
\begin{aligned}
\mathbb{E}_{\mu_{G,N,\beta}^r} 1_{\sigma_2^N\leq 1-\epsilon} \cdot \lVert\sigma_1 - \sigma_2 \rVert^2\leq & \frac{4(C_1\beta^{\frac{N-1}{2}})^{n-1}\cdot\exp(-\frac{\beta\epsilon^2}{2n})}{C_2^{n-1} \cdot \exp(-\frac{1}{2}m)}\\
\leq & C_0^{n} \beta^{C_0 n}\exp(-\frac{\beta}{C_0 n}+C_0 m).
\end{aligned}
\end{equation}
Note also that $m\geq n-1 \geq \frac{1}{2} n$ as the graph is connected. Thus, for $\beta= \kappa(n^2 \log m+ n m)\leq 6\kappa m^3$ where $\kappa>0$, we have 
\begin{equation}
\begin{aligned}
-\log \mathbb{E}_{\mu_{G,N,\beta}^r} 1_{\sigma_2^N> 1-\epsilon}\cdot \lVert\sigma_1 - \sigma_2 \rVert^2 &\geq \frac{\beta}{C_0 n}-C_0 n\log \beta-C_0 m-n\log C_0 \\
& \geq \frac{ \kappa(n \log m+m)}{C_0}-C_0 n \log (6 \kappa m^3) -3 C_0 m \\
& \geq (\frac{ \kappa}{C_0}-3C_0)n\log m +(\frac{ \kappa}{C_0}-3C_0) m -C_0  (\log 6\kappa) \cdot n,
\end{aligned}
\end{equation}
which will be larger than $-\log \delta$ if we take $\kappa>c(N,\delta)$ to be large enough. So we have 
$$
\mathbb{E}_{\mu_{G,N,\beta}^r} \lVert \sigma_x - \sigma_y \rVert^2= \mathbb{E}_{\mu_{G,N,\beta}^r} 1_{\sigma_2^N\leq 1-\epsilon}\cdot \lVert\sigma_1 - \sigma_2 \rVert^2 +\mathbb{E}_{\mu_{G,N,\beta}^r} 1_{\sigma_2^N> 1-\epsilon} \cdot \lVert\sigma_1 - \sigma_2 \rVert^2<2\delta,
$$
and $\mathbb{E}_{\mu_{G,N,\beta}} \sigma_x \cdot \sigma_y= 1-\frac{1}{2}\mathbb{E}_{\mu_{G,N,\beta}^r} \lVert \sigma_x - \sigma_y \rVert^2>1-\delta$.
\end{proof}

We now proceed to prove Theorem~\ref{low}, establishing Gaussian domination in the low-temperature regime.
\begin{thm}
Given $N\geq 1$ and a connected graph $G=(V, E)$, there exists $\beta_d>0$, such that there is a Gaussian domination in the spin $O(N)$ model for $G$ and $\beta>\beta_d$.
\end{thm}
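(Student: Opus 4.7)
The plan is to show that for $\beta$ sufficiently large, the Hessian of $Z^*_{G,N,\beta}(h)$ at $h=0$ is negative semidefinite and strictly negative in every non-constant direction $v\in(\mathbb{R}^N)^V$; since constant shifts in $h$ leave $Z^*$ invariant, this is enough for $h=0$ to be a local maximum. Fixing such a $v$ and reusing the second-derivative identity (\ref{de}) (the same one that drives the proof of Theorem~\ref{high}), together with summation by parts, the task reduces to verifying that
\[
\beta\,\mathbb{E}_{\mu_{G,N,\beta}}\!\left[\Bigl(\sum_{i\in V}\Delta v_i\cdot\sigma_i\Bigr)^{\!2}\right] \;<\; \sum_{\{i,j\}\in E}\lVert v_i-v_j\rVert^2
\]
for all non-constant $v$, where $\Delta v_i := \sum_{j\sim i}(v_i-v_j)$ is the graph Laplacian applied componentwise.

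The idea is to compute the $\beta\to\infty$ limit of the left-hand side exactly. Using $O(N)$-rotation invariance of $\mu_{G,N,\beta}$ one has $\mathbb{E}[\sigma_i^l\sigma_j^{l'}]=\frac{\delta_{ll'}}{N}\mathbb{E}[\sigma_i\cdot\sigma_j]$, and combining with $\sum_i\Delta v_i=0$ to drop the constant $1$ in $\mathbb{E}[\sigma_i\cdot\sigma_j]=1-\frac{1}{2}\mathbb{E}\lVert\sigma_i-\sigma_j\rVert^2$, the left-hand side equals
\[
-\frac{\beta}{2N}\sum_{i,j\in V}(\Delta v_i\cdot\Delta v_j)\,\mathbb{E}_{\mu_{G,N,\beta}}\lVert\sigma_i-\sigma_j\rVert^2.
\]
Theorem~\ref{mom} combined with Remark~\ref{rootrem} gives $\beta\,\mathbb{E}\lVert\sigma_i-\sigma_j\rVert^2\to(N-1)\,\mathbb{E}(\gamma_i^1-\gamma_j^1)^2$ as $\beta\to\infty$, for $\gamma$ the rooted GFF of the preceding subsection. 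Expanding $(\gamma_i^1-\gamma_j^1)^2$ and once more using $\sum_i\Delta v_i^l=0$ to kill the diagonal contributions, the limit collapses to
\[
\frac{N-1}{N}\sum_{l=1}^{N}\mathbb{E}\!\left[\Bigl(\sum_i \Delta v_i^l\,\gamma_i^1\Bigr)^{\!2}\right].
\]

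The final ingredient is the scalar identity $\mathbb{E}[(\sum_i\Delta w_i\,\gamma_i^1)^2]=\sum_{\{i,j\}\in E}(w_i-w_j)^2$ for every $w\in\mathbb{R}^V$: this follows because the inverse covariance of the one-dimensional rooted GFF is the graph Laplacian with Dirichlet boundary at the root, so summation by parts turns the Gaussian quadratic form into the Dirichlet energy. Applying this componentwise yields
\[
\lim_{\beta\to\infty}\beta\,\mathbb{E}_{\mu_{G,N,\beta}}\!\left[\Bigl(\sum_i\Delta v_i\cdot\sigma_i\Bigr)^{\!2}\right] \;=\; \frac{N-1}{N}\sum_{\{i,j\}\in E}\lVert v_i-v_j\rVert^2,
\]
which is strictly smaller than $\sum_{\{i,j\}}\lVert v_i-v_j\rVert^2$ whenever $v$ is non-constant (and simply $0$ when $N=1$, matching the Ising exponential-decay argument at the end of the proof of Theorem~\ref{mom}). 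Since the left-hand side is a continuous quadratic form in $v$ whose coefficient matrix converges entrywise to the finite limit above, on the compact unit sphere of the Dirichlet form in the finite-dimensional quotient $(\mathbb{R}^N)^V/\mathbb{R}^N$ the convergence is uniform. Choosing $\beta_d$ beyond which this strict bound holds uniformly in $v$ then delivers the Gaussian domination. The only mildly delicate point is precisely this upgrade from pointwise to uniform convergence in $v$, but it is automatic in our finite-dimensional setting.
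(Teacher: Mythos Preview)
Your proposal is correct and follows essentially the same route as the paper: both reduce via (\ref{de}) and summation by parts to showing that the quadratic form $\beta\,\mathbb{E}_{\mu_{G,N,\beta}}[(\sum_i\Delta v_i\cdot\sigma_i)^2]$ converges, as $\beta\to\infty$, to $\tfrac{N-1}{N}\sum_{\{i,j\}}\lVert v_i-v_j\rVert^2$ (using Theorem~\ref{mom}/Remark~\ref{rootrem}), and then invoke finite-dimensional compactness on the quotient by constants to make the strict inequality uniform in $v$. The only cosmetic difference is that the paper carries the Laplacian on the $\sigma$ side and computes $\mathbb{E}[\beta\,\Delta\sigma_x^l\,\Delta\sigma_y^l]\to\tfrac{N-1}{N}(d(x)\mathbf{1}_{x=y}-\mathbf{1}_{\{x,y\}\in E})$ via Lemma~\ref{deltalem}, whereas you carry the Laplacian on the $v$ side and package the same limit as the GFF identity $\mathbb{E}[(\sum_i\Delta w_i\,\gamma_i^1)^2]=\sum_{\{i,j\}}(w_i-w_j)^2$; similarly, the paper makes the compactness step explicit (normalising $v_1=0$, $\sum\lVert v_x\rVert^2=1$, and bounding the Dirichlet energy below by $n^{-3}$) while you appeal to it abstractly.
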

\begin{proof}
We rewrite (\ref{de}) for convenience. Recall that $v= (v_1, v_2, \dots, v_n) \in (\mathbb{R}^N)^V$ ($v \neq 0$) is a direction vector. Note that by the translation invariance of the modified partition function $Z^*_{G, N, \beta}$, we could suppose $v_x (x\in V)$ are not all the same and furthermore renormalize $v$ to take $v_1=0$ and $\sum\limits_{x\in V\setminus \{1\}} \lVert v_x \rVert ^2=1$.
\begin{equation}
\begin{aligned}
Z_v''(0)=&  \beta^2\int_{(\mathbb{S}^{N-1})^V}[\sum_{\{i, j\}\in E}(v_i-v_j)\cdot(\sigma_i-\sigma_j)]^2\exp(-\frac{\beta}{2}\sum\limits_{\{i, j\}\in E}\lVert\sigma_i-\sigma_j \rVert^2) d\sigma \\
&-\beta\int_{(\mathbb{S}^{N-1})^V}[\sum_{\{i, j\}\in E}\lVert v_i-v_j \rVert^2]\exp(-\frac{\beta}{2}\sum\limits_{\{i, j\}\in E}\lVert\sigma_i-\sigma_j \rVert^2) d\sigma\\
=& \beta Z^*_{G, N,\beta}(0) \cdot \mathbb{E}_{\mu_{G,N,\beta}}\big[\beta [\sum_{\{i, j\}\in E}(v_i-v_j)\cdot(\sigma_i-\sigma_j)]^2-\sum_{\{i, j\}\in E}\lVert v_i-v_j \rVert^2\big].
\end{aligned}
\end{equation}
As before, we write $v_x= (v_x^1, \dots, v_x^N)=\sum_{l=1}^N v_x^l e_l$ for $x\in V$, where \( \{e_1, e_2, \dots, e_N\} \) is the standard orthonormal basis of \( \mathbb{R}^N \). Note that for any $i,j \in V$, $l_1, l_2 \in \{1, 2, \dots, N\}$ and $l_1 \neq l_2$ we have $\mathbb{E}_{\mu_{G,N,\beta}}\sigma_i^{l1}\cdot \sigma_j^{l2}=0$, as the spin $O(N)$ model is invariant under a global reflection that sends $e_{l1}$ to $-e_{l1}$. Thus
\begin{equation}
\label{regular}
\begin{aligned}    
\frac{Z_v''(0)}{\beta Z^*_{G, N,\beta}(0)}= & \sum_{l=1}^N \mathbb{E}_{\mu_{G,N,\beta}}\beta (\sum_{\{i, j\}\in E}(v_i^l-v_j^l)\cdot(\sigma_i^l-\sigma_j^l))^2-\sum_{\{i, j\}\in E}(v_i^l-v_j^l)^2\\
= &\sum_{l=1}^N \mathbb{E}_{\mu_{G,N,\beta}}\beta (\sum_{x \in V} v_x^l \cdot \Delta \sigma_x^l)^2- \sum_{\{i, j\}\in E}\lVert v_i-v_j \rVert^2\\
= &\sum_{l=1}^N  (\sum_{x,y \in V} v_x^l v_y^l \cdot \mathbb{E}_{\mu_{G,N,\beta}}\beta \Delta \sigma_x^l \cdot \Delta \sigma_y^l)- \sum_{\{i, j\}\in E}\lVert v_i-v_j \rVert^2,
\end{aligned}
\end{equation}
where $\Delta \sigma_x^l:=\sum\limits_{\{x, z\}\in E} (\sigma_x^l-\sigma_z^l)$. On the other hand, by the remark under Theorem \ref{mom}, for any $x,y \in V, l=1, 2, \dots, N$, we have when $\beta \to \infty$
\begin{equation}
\begin{aligned}    
\mathbb{E}_{\mu_{G,N,\beta}} \beta\sigma_x^l \cdot \sigma_y^l= & \frac{1}{N} \beta \cdot \mathbb{E}_{\mu_{G,N,\beta}} \sigma_x \cdot \sigma_y\\
=& \frac{1}{2N} \beta \cdot (1-\mathbb{E}_{\mu_{G,N,\beta}} \lVert\sigma_x-\sigma_y \rVert^2)\\
\to& \frac{1}{2N} (\beta-(N-1) (\frac{u_{x1}(x)}{d(x)}+\frac{u_{y1}(y)}{d(y)}-2\frac{u_{x1}(y)}{d(x)})).
\end{aligned}
\end{equation}
Thus
\begin{equation}
\begin{aligned}    
\mathbb{E}_{\mu_{G,N,\beta}} \beta\sigma_x^l \cdot \Delta\sigma_y^l \to & 0\cdot \beta +\frac{N-1}{2N} \sum_{\{y,w\}\in E} [\frac{u_{x1}(x)}{d(x)}(1-1)+\frac{u_{y1}(y)}{d(y)}-\frac{u_{w1}(w)}{d(w)}-2\frac{u_{x1}(y)}{d(x)}+2\frac{u_{x1}(w)}{d(x)}]\\
= & \frac{N-1}{2N}\sum_{\{y,w\}\in E}(\frac{u_{y1}(y)}{d(y)}-\frac{u_{w1}(w)}{d(w)})+\frac{N-1}{N}\cdot\frac{\Delta u_{x1}(y)}{d(x)},
\end{aligned}
\end{equation}
and 
\begin{equation}
\begin{aligned}    
\mathbb{E}_{\mu_{G,N,\beta}} \beta\Delta \sigma_x^l \cdot \Delta \sigma_y^l \to & \frac{N-1}{2N}\sum_{\{x,z\}\in E}\sum_{\{y,w\}\in E}[(\frac{u_{y1}(y)}{d(y)}-\frac{u_{w1}(w)}{d(w)})\cdot(1-1)]\\
&+ \frac{N-1}{N} \sum\limits_{\{x,z\}\in E}(\frac{\Delta u_{x1}(y)}{d(x)}-\frac{\Delta u_{z1}(y)}{d(z)})\\
=&\frac{N-1}{N} \sum\limits_{\{x,z\}\in E}(\frac{\Delta u_{x1}(y)}{d(x)}-\frac{\Delta u_{z1}(y)}{d(z)}).
\end{aligned}
\end{equation}
By Lemma \ref{deltalem} it's not hard to check that $\mathbb{E}_{\mu_{G,N,\beta}} \beta \Delta \sigma_x^l \cdot \Delta \sigma_y^l \to \frac{N-1}{N} (d(x) \cdot 1_{x=y}- 1_{\{x,y\} \in E})$. (The case $x=1$ or $y=1$ is not excluded.) So we have
\begin{equation}
\label{limiting}
\begin{aligned}    
\frac{Z_v''(0)}{\beta Z^*_{G, N,\beta}(0)}\to & \frac{N-1}{N} \sum_{l=1}^N ( \sum_{x\in V}d(x)(v_x^l)^2- 2\sum_{\{i,j\} \in E}v_i^l v_j^l)- \sum_{\{i, j\}\in E}\lVert v_i-v_j \rVert^2\\
= & -\frac{1}{N}\sum_{\{i, j\}\in E}\lVert v_i-v_j \rVert^2.
\end{aligned}
\end{equation}
Since $G$ is connected, by $v_1=0$ and $\sum\limits_{x\in V\setminus \{1\}} \lVert v_x \rVert ^2=1$ we have that
\begin{equation}
\label{cubeb}
\sum_{\{i, j\}\in E}\lVert v_i-v_j \rVert^2\geq (\frac{1}{n} \max_{x\in V} \lVert v_x -v_1\rVert)^2 =(\frac{1}{n} \max_{x\in V} \lVert v_x\rVert)^2  \geq \frac{1}{n^3}.
\end{equation}
For any $\epsilon >0$, we could find $\beta_d$ large enough such that for any $\beta>\beta_d$, $l\in \{1, 2, \dots, N\}$ and $x,y\in V$ we have $|\mathbb{E}_{\mu_{G,N,\beta}} \beta \Delta \sigma_x^l \cdot \Delta \sigma_y^l - \frac{N-1}{N} (d(x) \cdot 1_{x=y}- 1_{\{x,y\} \in E})|<\epsilon$. Thus, by (\ref{regular}), (\ref{limiting}) and (\ref{cubeb}) we have $\frac{Z_v''(0)}{\beta Z^*_{G, N,\beta}(0)}<-\frac{1}{N n^3}+ \epsilon \sum\limits_{l=1}^N \sum\limits_{x,y \in V} |v_x^l v_y^l| \leq -\frac{1}{N n^3}+\epsilon  N n^2 $. Take $\epsilon<\frac{1}{N^2 n^5}$, we could conclude that $Z_v''(0)<0$ and there is a Gaussian domination for $\beta>\beta_d$.
\end{proof}

\section{A Counterexample of Gaussian domination}
\label{sec5} 
In this section, we present the counterexample for Theorem \ref{addstar}. 
\begin{thm}
For any $N$ and $\beta>0$, there exists $n_0\in \mathbb{N}$, such that for any graph $G=(V, E)$, we can find another graph $G'=(V', E')$, such that $G \subset G'$, $|V'|-|V| \leq n_0$, $|E'|- |E| \leq n_0$, and there's no Gaussian domination in the spin $O(N)$ model for $G'$ and $\beta$.
\end{thm}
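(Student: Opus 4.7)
The plan is to construct $G'$ by grafting a $k$-leaf star onto $G$ via a single edge, where $k = k(\beta, N)$ is a sufficiently large integer depending only on $\beta$ and $N$; then $n_0 := k+1$ is universal. Concretely, fix any $x \in V$, introduce a new center $c$ and new leaves $l_1, \dots, l_k$, and set $V' = V \cup \{c, l_1, \dots, l_k\}$, $E' = E \cup \{\{x, c\}\} \cup \{\{c, l_j\} : 1 \le j \le k\}$. To witness the failure of Gaussian domination on $G'$, I would use the second-derivative formula (\ref{de}) from the proof of Theorem~\ref{GDE} applied to the direction $v \in (\mathbb{R}^N)^{V'}$ defined by $v_{l_j} = a$ for a fixed unit vector $a$ and $v_i = 0$ for all other $i$ (including $c$ and every vertex of $V$). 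Only the $k$ edges $\{c, l_j\}$ then contribute, and the formula simplifies to
\begin{equation*}
\frac{Z_v''(0)}{\beta Z^*_{G', N, \beta}(0)} = \beta \cdot \mathbb{E}_{\mu_{G',N,\beta}}\left[\left(a \cdot \sum_{j=1}^k (\sigma_{l_j} - \sigma_c)\right)^2\right] - k,
\end{equation*}
so it suffices to make the right-hand side positive for $k$ large enough.

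The key step is to estimate the expectation by conditioning on $\sigma_c$. Since each leaf $l_j$ has $c$ as its unique neighbor in $G'$, the spins $\sigma_{l_1}, \dots, \sigma_{l_k}$ are conditionally i.i.d.\ given $\sigma_c$, with density proportional to $\exp(\beta\, \sigma_c \cdot \sigma_{l_j})$ on $\mathbb{S}^{N-1}$. Rotational symmetry about the axis $\sigma_c$ gives $\mathbb{E}[\sigma_{l_j} \mid \sigma_c] = f(\beta)\, \sigma_c$, where $f(\beta) = f(\beta, N) \in (0, 1)$ is the classical one-body magnetization on $\mathbb{S}^{N-1}$, which is strictly less than $1$ for every finite $\beta > 0$. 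Expanding the square and using conditional independence, the $k(k-1)$ cross terms contribute
\begin{equation*}
\mathbb{E}\left[\left(a \cdot \sum_{j=1}^k (\sigma_{l_j} - \sigma_c)\right)^2\right] \ge k(k-1)\,(1-f(\beta))^2\, \mathbb{E}\left[(a \cdot \sigma_c)^2\right],
\end{equation*}
while the global $O(N)$-invariance of $\mu_{G',N,\beta}$ forces the marginal of $\sigma_c$ to be uniform on $\mathbb{S}^{N-1}$, so $\mathbb{E}[(a \cdot \sigma_c)^2] = 1/N$. The bound is therefore quadratic in $k$ with positive coefficient $(1-f(\beta))^2/N$, and beats $k/\beta$ as soon as $k > 1 + N/(\beta(1-f(\beta))^2)$. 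Choosing $n_0 = k+1$ at least this large (depending only on $\beta, N$) yields $Z_v''(0) > 0$, and hence no Gaussian domination on $G'$.

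I do not foresee a serious obstacle. The only nontrivial check is that $f(\beta) < 1$ strictly for finite $\beta$, which is immediate: the conditional law of a leaf is given by an explicit density on $\mathbb{S}^{N-1}$ that is not a Dirac mass at $\sigma_c$ (when $N = 1$ one simply has $f(\beta) = \tanh(\beta) < 1$). The global $O(N)$-invariance of the joint law and the conditional independence of the leaves given $\sigma_c$ are immediate from the definition of $\mu_{G',N,\beta}$ and the graph structure of $G'$; all remaining steps are elementary algebra.
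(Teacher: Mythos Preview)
Your proof is correct and follows the same core idea as the paper: attach a star and exploit the $k(k-1)$ cross terms between leaf spins to force $Z_v''(0)>0$. The paper attaches the $n_0$ new leaves directly to a vertex of $G$ (no bridge vertex $c$) and sets $v$ equal to a fixed unit vector on all of $V$ and zero on the leaves; this makes the modified partition function factor into the $G$-part times a pure star contribution, and positivity is then verified through the explicit integrals $\int_{\sigma_2}\tfrac{\partial}{\partial e}U(\sigma_1-\sigma_2)\,d\sigma_2$, the key fact being that this integral is nonzero at $\sigma_1=e$---which is exactly your $f(\beta)<1$ in disguise. Your probabilistic packaging (conditional independence of the leaves given $\sigma_c$, $\mathbb{E}[\sigma_{l_j}\mid\sigma_c]=f(\beta)\sigma_c$, and $\mathbb{E}[(a\cdot\sigma_c)^2]=1/N$ from global $O(N)$-invariance) is cleaner, avoids any explicit integral, and yields a concrete threshold $k>1+N/(\beta(1-f(\beta))^2)$ that the paper does not extract. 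The bridge vertex costs one extra vertex and edge but is otherwise immaterial; note incidentally that you could also attach the leaves directly to $x$ and run the identical conditioning argument with $\sigma_x$ in place of $\sigma_c$.
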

\begin{proof} We begin with the proof of the following lemma.
\begin{lem}
\label{star'}
For any $N$ and $\beta>0$, a graph $G_\beta=(V_\beta, E_\beta)$ always exists such that there is no Gaussian domination in the spin $O(N)$ model for $G_\beta$ and $\beta$.
\end{lem}
\begin{figure}[htbp]
    \centering
    
    \includegraphics[scale=1]{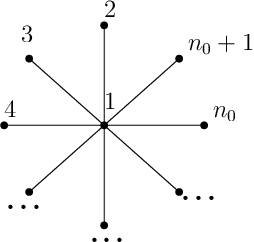}
    \caption{Counterexample}
    \label{fig:example}
\end{figure}
\begin{proof}[Proof of Lemma \ref{star'}]
Pick a graph $G_\beta=(V_\beta,E_\beta)$ such that $|V_\beta|=n=n_0+1, n_0 \in \mathbb{N}^*$ and $\{i,j\}\in E_\beta$ if and only if $i=1$ or $j=1$. We will prove that $Z^*_{G_\beta, N,\beta} (0)$ is not the maximum when $n_0$ is large enough. 

Let $e$ be a unit vector of $\mathbb{R}^N$, we fix a direction vector $v=(v_1, v_2 \dots, v_n) \in (\mathbb{R}^N)^V$ by setting $v=(e,0,\dots, 0)$. Again we define $Z_v(\eta) : \mathbb{R} \to \mathbb{R}$ by $Z_v(\eta):= Z^*_{G_\beta, N, \beta}(\eta v)$ for any $\eta \in \mathbb{R}$. Furthermore, for convenience we write $U(\sigma_1, \sigma_2):= \exp(-\frac{1}{2}\beta\lVert\sigma_1- \sigma_2 \rVert^2)$ and $U(\sigma_1)=U(0,\sigma_1)$ for short. Note that by symmetry $Z_v'(0)=0$, we want to show that $Z_v''(0)>0$ for $n_0$ large enough. 

In fact,
\begin{equation}
\begin{aligned}
 Z_v(\eta)&= \int_{\sigma} \prod\limits_{\{i, j\}\in E} U(\sigma_i +\eta v_i, \sigma_j + \eta v_j) \cdot d\sigma \\
 & =\int_{\sigma_1} \prod_{i\in V\setminus \{1\}}(\int_{ \sigma_i} U(\sigma_1 +\eta e - \sigma_i)d\sigma_i) \cdot d\sigma_1\\
 & =\int_{\sigma_1} (\int_{ \sigma_2} U(\sigma_1 +\eta e - \sigma_2)d\sigma_2)^{n_0} \cdot d\sigma_1.
 \end{aligned}
\end{equation}
Thus, by the dominated convergence theorem we have
\begin{equation}
\begin{aligned}
 Z_v'(\eta) &=n_0\int_{\sigma_1} (\int_{ \sigma_2 }  \frac{\partial}{\partial e}U(\sigma_1 +\eta e- \sigma_2)d\sigma_2 )(\int_{\sigma_2} U(\sigma_1 +\eta e- \sigma_2)d\sigma_2 )^{n_0-1}\cdot d\sigma_1,\\
 \end{aligned}
\end{equation}
and

\begin{equation}
\begin{aligned}
 Z_v''(0) &=n_0(n_0-1)\int_{\sigma_1} (\int_{ \sigma_2 }  \frac{\partial}{\partial e}U(\sigma_1- \sigma_2)d\sigma_2 )^{2}(\int_{\sigma_2 } U(\sigma_1- \sigma_2)d\sigma_2 )^{n_0-2}\cdot d\sigma_1\\
 &+n_0\int_{\sigma_1} (\int_{ \sigma_2 }  \frac{\partial^2}{\partial e^2}U(\sigma_1- \sigma_2)d\sigma_2 )(\int_{\sigma_2 } U(\sigma_1- \sigma_2)d\sigma_2 )^{n_0-1}\cdot d\sigma_1 .
 \end{aligned}
\end{equation}
Note that by symmetry $\int_{\sigma_2} U(\sigma_1- \sigma_2)d\sigma_2$ is a positive constant for any $\sigma_1 \in  S^{N-1}$,  and we denote this constant by $C$. Thus, we only need to check that 
\begin{equation}
\label{pos}    
\int_{\sigma_1} (\int_{ \sigma_2}  \frac{\partial}{\partial e}U(\sigma_1- \sigma_2)d\sigma_2 )^{2}>0,
\end{equation}
so that
\begin{equation}
\label{sod}
\begin{aligned}
\frac{1}{n_0 C^{n_0-2}} Z_v''(0) = & (n_0-1)\int_{\sigma_1 } (\int_{ \sigma_2 }  \frac{\partial}{\partial e}U(\sigma_1- \sigma_2)d\sigma_2 )^{2}\cdot d\sigma_1 \\
&+C\int_{\sigma_1} (\int_{ \sigma_2 }  \frac{\partial^2}{\partial e^2}U(\sigma_1- \sigma_2)d\sigma_2 )\cdot d\sigma_1
\end{aligned}
\end{equation}
would be positive for $n_0$ large enough.

On the other hand, when $\sigma_1=e$ we could check that 
$$\int_{ \sigma_2}\frac{\partial}{\partial e}U(\sigma_1- \sigma_2)d\sigma_2 \neq 0,$$
thus (\ref{pos}) holds as it is an integral of a continuous non-negative function of $\sigma_1$.

\end{proof}

Having established the lemma, we now turn to proving the theorem itself. We simply take $V'= V \cup \{n+1, n+2, \dots, n+n_0\}$ and $E'= E \cup \{\{1,n+1\}, \{1,n+2\}, \dots, \{1,n+n_0\}\}$, $v_1=v_2=\dots =v_n=e$, and $v_{n+1}=v_{n+2}=\dots =v_{n+n_0}=0$. $e$ is again a unit vector in $\mathbb{R}^N$. In this case, we note that 
\begin{equation}
\begin{aligned}    
Z^*_{G', N, \beta}(\eta v)=&\int_{\sigma}\prod_{\{x,y\}\in E} U(\sigma_x -\sigma_y) \prod_{\{1,y\}\in E'} U(\sigma_1+\eta e -\sigma_y) d\sigma \\ 
=&\int_{\sigma_1}(\int_{\sigma_2}\dots\int_{\sigma_n}\prod_{\{x,y\}\in E} U(\sigma_x -\sigma_y)d\sigma_2\dots d\sigma_n)\cdot\\
&(\int_{\sigma_{n+1}}\dots\int_{\sigma_{n+n_0}} \prod_{\{1,y\}\in E'} U(\sigma_1+\eta e -\sigma_y) d\sigma_{n+1}\dots d\sigma_{n+n_0})d\sigma_1\\
=&\int_{\sigma_1}\frac{1}{\int_{\sigma_1}d\sigma_1}Z^*_{G, N, \beta}(0) (\int_{ \sigma_{n+1}} U(\sigma_1 +\eta e- \sigma_{n+1}) \cdot d\sigma_{n+1})^{n_0} d\sigma_1 \\
=&\frac{1}{\int_{\sigma_1}d\sigma_1}Z^*_{G, N, \beta}(0) \int_{\sigma_1}(\int_{ \sigma_{n+1}} U(\sigma_1 +\eta e- \sigma_{n+1}) \cdot d\sigma_{n+1})^{n_0}d\sigma_1.
\end{aligned}
\end{equation}
This is simply a constant multiple of the partition function in Lemma \ref{star'}. By the computation there we know that there is no Gaussian domination for a suitable $n_0$.
\end{proof}
\begin{rem}
Note that in (\ref{sod}) we actually have $$\int_{\sigma_1} (\int_{ \sigma_2 }  \frac{\partial^2}{\partial e^2}U(\sigma_1- \sigma_2)d\sigma_2 )\cdot d\sigma_1<0.$$ This is, in fact, the Gaussian domination for a segment (or a torus with side length $2$ and dimension $1$).
\end{rem}

\section{A Counterexample  in binary trees}
\label{sec6}
In this section, we present a counterexample to Gaussian domination in binary trees, thereby establishing Theorem~\ref{binary}. For \( N = 1 \) (the Ising model), we use, for convenience, the spin measure \( d\sigma_i = \delta_{-1} + \delta_1 \) throughout this section.
\begin{thm}
Let $N=1$, there exists $\beta_0>0$ such that for any $\beta>\beta_0$, there exists a binary tree $G=(V, E)$, such that there is no Gaussian domination in the spin $O(N)$ model on $G$ at inverse temperature $\beta$.
\end{thm}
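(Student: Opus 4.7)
My plan is to directly exhibit, for any $\beta > 1$, a complete binary tree $G_L$ of sufficient depth $L$ together with a direction vector $v \in \mathbb{R}^V$ for which $Z_v''(0) > 0$, which by definition rules out Gaussian domination. The second-derivative formula from the proof of Theorem \ref{GDE2}, specialised to $N=1$, reads
\begin{equation*}
Z_v''(0) \;=\; \beta \, Z^*_{G_L,1,\beta}(0) \Bigl( \beta \, \mathbb{E}_{\mu_{G_L,1,\beta}}[f^2] \;-\; \sum_{\{i,j\}\in E}(v_i - v_j)^2 \Bigr),
\end{equation*}
where $f(\sigma) := \sum_{\{i,j\}\in E}(v_i - v_j)(\sigma_i - \sigma_j) = \sum_i \sigma_i (\Delta v)_i$, so the goal is to design $v$ making the bracket strictly positive.

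Take $G_L$ to be the complete binary tree of depth $L$ rooted at $r$, and set $v_i := 2^{-\ell(i)}$, where $\ell(i)$ is the depth of $i$. The key feature is that $v$ is discrete harmonic at every internal non-root vertex: such $i$ at depth $0 < \ell < L$ has degree $3$, with one neighbour at depth $\ell - 1$ and two at depth $\ell + 1$, so $(\Delta v)_i = 3 \cdot 2^{-\ell} - 2^{-\ell+1} - 2 \cdot 2^{-\ell-1} = 0$. Hence $\Delta v$ is supported only on $r$ and on the $2^L$ leaves, with $(\Delta v)_r = 1$ and $(\Delta v)_\ell = -2^{-L}$ for each leaf. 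Writing $\Lambda$ for the leaf set, this gives $f(\sigma) = \sigma_r - 2^{-L}\sum_{\ell \in \Lambda}\sigma_\ell$ and $\sum_{\{i,j\}\in E}(v_i-v_j)^2 = v^T \Delta v = 1 - 2^{-L}$. Using the standard factorisation $\mathbb{E}_{\mu}\sigma_u \sigma_v = (\tanh\beta)^{d(u,v)}$ for free Ising on any tree, I would expand
\begin{equation*}
\mathbb{E}_{\mu}[f^2] \;=\; 1 \;-\; 2(\tanh \beta)^L \;+\; 2^{-2L}\,\mathbb{E}_{\mu}\Bigl[\Bigl(\sum_{\ell \in \Lambda}\sigma_\ell\Bigr)^2\Bigr],
\end{equation*}
and compute the last term by counting pairs of leaves according to the depth $k$ of their lowest common ancestor: there are $2^L$ diagonal pairs and $2^{2L-k-1}$ ordered off-diagonal pairs for each $0 \leq k \leq L-1$, contributing $(\tanh \beta)^{2(L-k)}$ each. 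This collapses into the geometric expression $2^{-L} + 2^{-L-1}\sum_{j=1}^L (2\tanh^2\beta)^j$, which tends to $0$ as $L \to \infty$ since $\tanh\beta < 1$, regardless of whether $2\tanh^2\beta$ is above or below $1$. Consequently $\mathbb{E}_{\mu}[f^2] \to 1$, the bracket in the formula for $Z_v''(0)$ tends to $\beta - 1$, and for $\beta > 1$ and $L$ large enough it is positive. Thus Gaussian domination fails on $G_L$ and $\beta_0 = 1$ suffices.

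The main obstacle I expect is the choice of $v$. Natural localised choices---indicators of subtrees, indicators of single vertices, linear depth profiles---produce either an $f$ involving only nearby spins (whose correlations are close to $1$) or an undesirably large Dirichlet energy, and elementary estimates show that none of them can force the bracket positive even in the limit $L \to \infty$. The exponential profile $v_i = 2^{-\ell(i)}$ is precisely calibrated to the branching factor, making $v$ harmonic away from the root and leaves and thereby collapsing $\Delta v$ onto a two-scale structure: a $\delta$-mass at the root, contributing the full variance $\operatorname{Var}(\sigma_r) = 1$ to $\mathbb{E}_{\mu}[f^2]$, plus a uniform weight on the $2^L$ leaves whose contribution vanishes because the average $2^{-L}\sum_{\ell \in \Lambda}\sigma_\ell$ concentrates at $0$. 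The delicate verification is this concentration above the tree-critical temperature $\operatorname{arctanh}(1/\sqrt 2)$, where the relevant geometric sum blows up; the saving is that its leading term after rescaling is $(\tanh\beta)^{2L}$, which still decays because $\tanh\beta<1$ for every finite $\beta$, so no condition stronger than $\beta>1$ is ultimately needed.
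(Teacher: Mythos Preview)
Your proposal is correct and takes a genuinely different route from the paper. The paper chooses $v$ to be the indicator of the leaf set (i.e.\ $v=0$ on the first $k-1$ generations, $v=1$ on the leaves), computes $Z_v''(0)$ via the auxiliary function $\mathcal{U}(\eta)=e^{-\beta\eta^2/2}+e^{-\beta(2-\eta)^2/2}$, and reduces positivity to the divergence of $\sum_{l=1}^{k-2}(2\tanh^2\beta)^l$ as $k\to\infty$; this forces the restriction $2\tanh^2\beta>1$, i.e.\ $\beta_0=\operatorname{arctanh}(1/\sqrt{2})\approx 0.88$. You instead pick the harmonic profile $v_i=2^{-\ell(i)}$, which collapses $\Delta v$ onto the root and the leaves and turns $f$ into $\sigma_r$ minus the leaf average $2^{-L}\sum_{\ell\in\Lambda}\sigma_\ell$. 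The point you exploit is that this average concentrates at $0$ for every finite $\beta$: even when $2\tanh^2\beta>1$ the controlling term is $(\tanh^2\beta)^L\to 0$. Hence $\mathbb{E}[f^2]\to 1$ while the Dirichlet energy $\to 1$, and the bracket tends to $\beta-1$. Your argument is more conceptual (harmonicity does all the work) and gives the clean explicit threshold $\beta_0=1$; the paper's choice of $v$ yields a slightly smaller $\beta_0$ but through a heavier direct calculation. Both are valid proofs of the stated theorem.
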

\begin{figure}[htbp]
    \centering
    
    \includegraphics[scale=1]{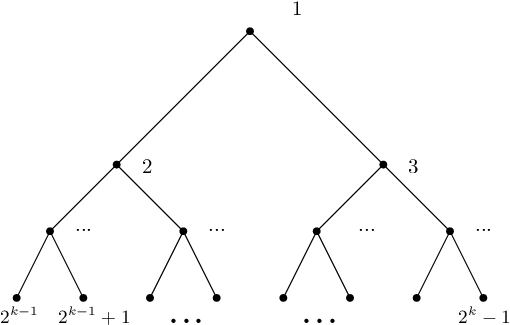}
    \caption{Binary Tree}
    \label{fig:tree}
\end{figure}
\begin{proof}
We simply let $G=(V, E)$ be a perfect binary tree with depth $k>3$ so that $|V|=2^k-1$. For convenience, we denote the first $k-1$ generations of $G$ by $G'=(V',E')$, so that $|V'|=2^{k-1}-1$. Take a direction vector $v=(v_1, v_2, \dots, v_{2^k-1})\in \mathbb{R}^V$, where $v_1=v_2= \dots=v_{2^{k-1}-1}=0$, $v_{2^{k-1}}=v_{2^{k-1}+1}=\dots=v_{2^k-1}=1$. Recall that $Z_v(\eta) : \mathbb{R} \to \mathbb{R}$ is defined by $Z_v(\eta):= Z^*_{G, N, \beta}(\eta v)$ for any $\eta \in \mathbb{R}$. Note that 
\begin{equation}
\begin{aligned}    
Z_v(\eta) =& \sum_{\sigma\in\{-1, 1\}^V}\exp(-\frac{\beta}{2}\sum_{\{i, j\}\in E}\lVert\sigma_i+\eta v_i-\sigma_j-\eta v_j \rVert^2)\\
                     =& \sum_{\sigma\in\{-1, 1\}^V}Z^*_{G',N,\beta}(0)\cdot \mu_{G',N,\beta}(\sigma_1,\sigma_2,\dots,\sigma_{2^{k-1}-1})\exp(-\frac{\beta}{2}\sum_{\{i, j\}\in E\setminus E'}\lVert\sigma_i+\eta v_i-\sigma_j-\eta v_j \rVert^2)\\
                     =& \sum_{\sigma\in\{-1, 1\}^{V'}}Z^*_{G',N,\beta}(0)\cdot \mu_{G',N,\beta}(\sigma_1,\sigma_2,\dots,\sigma_{2^{k-1}-1})\cdot\\
                     &\prod_{i=2^{k-2}}^{2^{k-1}-1}(\exp(-\frac{\beta}{2}\eta^2)+1_{\sigma_i=1}\cdot\exp(-\frac{\beta}{2}(2-\eta)^2)+1_{\sigma_i=-1}\cdot\exp(-\frac{\beta}{2}(2+\eta)^2))^2\\
                     =& \sum_{\sigma\in\{-1, 1\}^{V'}}Z^*_{G',N,\beta}(0)\cdot \mu_{G',N,\beta}(\sigma_1,\sigma_2,\dots,\sigma_{2^{k-1}-1})\prod_{i=2^{k-2}}^{2^{k-1}-1}\mathcal{U}(\sigma_i \eta)^2,                     
\end{aligned}
\end{equation}
where $\mathcal{U}(\eta):=\exp(-\frac{\beta}{2}\eta^2)+\exp(-\frac{\beta}{2}(2-\eta)^2)$.
Furthermore, notice that when $\eta \to 0$,
\begin{equation}
\mathcal{U}(\eta)=\mathcal{U}(0)+\mathcal{U}'(0)\eta+\frac{1}{2}\mathcal{U}''(0)\eta^2+o(\eta^2),
\end{equation}
so that
\begin{equation}
\mathcal{U}(\sigma_i \eta)^2=\mathcal{U}(0)^2+2\mathcal{U}(0)\mathcal{U}'(0)\sigma_i\eta+(\mathcal{U}(0)\mathcal{U}''(0)+\mathcal{U}'(0)^2)\eta^2+o(\eta^2).
\end{equation}
Thus we have
\begin{equation}
\label{Tal}
\begin{aligned}    
\frac{Z_v(\eta)}{Z^*_{G',N,\beta}(0)} = & \sum_{\sigma\in\{-1, 1\}^{V'}}
                     \mu_{G',N,\beta}(\sigma) \prod_{i=2^{k-2}}^{2^{k-1}-1}(\mathcal{U}(0)^2+2\mathcal{U}(0)\mathcal{U}'(0)\sigma_i\eta+(\mathcal{U}(0)\mathcal{U}''(0)+\mathcal{U}'(0)^2)\eta^2+o(\eta^2))\\
                     =& \frac{Z_{G, N, \beta}}{Z^*_{G',N,\beta}(0)}+ \eta^2 \sum_{\sigma\in\{-1, 1\}^{V'}}\mu_{G',N,\beta}(\sigma)\cdot2^{k-2}\mathcal{U}(0)^{2(2^{k-2}-1)}(\mathcal{U}(0)\mathcal{U}''(0)+\mathcal{U}'(0)^2)\\
                     & +\eta^2 \sum_{\sigma\in\{-1, 1\}^{V'}}\mu_{G',N,\beta}(\sigma)\cdot 4\mathcal{U}(0)^{2(2^{k-2}-1)}\mathcal{U}'(0)^2(\sum_{2^{k-2}\leq i<j\leq2^{k-1}-1}\sigma_i\sigma_j)+o(\eta^2).
\end{aligned}
\end{equation}
Here we only need to consider the second-order term as $Z_v(0)=Z_{G, N, \beta}$ and $Z_v'(0)=0$ by symmetry. 

On the other hand, note that 
\begin{equation}
\label{Tot}
\sum\limits_{\sigma\in\{-1, 1\}^{V'}}\mu_{G',N,\beta}(\sigma)=1
\end{equation} 
and 
\begin{equation}
\label{expect}
\sum\limits_{\sigma\in\{-1, 1\}^{V'}}\mu_{G',N,\beta}(\sigma)\sigma_i\sigma_j=\mathbb{E}_{G', N,\beta}\sigma_i\sigma_j. 
\end{equation}
Let $d(i,j)$ denote the graph distance between vertices $i$ and $j$ and let $\chi_0=i, \chi_1, \chi_2 \dots, \chi_{d(i,j)}=j$ denote the unique shortest path connecting $i$ to $j$. For convenience, we define the spin ratio $\omega_s:=\frac{\sigma_{\chi_s}}{\sigma_{\chi_{s+1}}}\in \{-1,1\}$ for $s=0, 1, \dots, d(i,j)-1$ and we have 
\begin{equation}
\begin{aligned}    
\mathbb{E}_{G', N, \beta} \sigma_i \sigma_j=& \frac{\sum_\sigma\prod_{s=0}^{d(i,j)-1}\sigma_{\chi_s}\sigma_{\chi_s+1}\exp(-\frac{\beta}{2}\lVert\sigma_{\chi_s}-\sigma_{\chi_{s+1}} \rVert^2)}{\sum_\sigma\prod_{s=0}^{d(i,j)-1}\exp(-\frac{\beta}{2}\lVert\sigma_{\chi_s}-\sigma_{\chi_{s+1}} \rVert^2)}\\
=& \frac{\sum_\sigma\prod_{s=0}^{d(i,j)-1}\omega_{s}\exp(-\frac{\beta}{2}(1-\omega_{s})^2)}{\sum_\sigma\prod_{s=0}^{d(i,j)-1}\exp(-\frac{\beta}{2}(1-\omega_s)^2)}\\
=& \frac{\prod_{s=0}^{d(i,j)-1}(\sum_\omega\omega_{ s}\exp(-\frac{\beta}{2}(1-\omega_{ s})^2))}{\prod_{s=0}^{d(i,j)-1}(\sum_\omega \exp(-\frac{\beta}{2}(1-\omega_s)^2)}\\
=&(\frac{1-\exp(-2\beta)}{1+\exp(-2\beta)})^{d(i,j)}.
\end{aligned}    
\end{equation}
Thus
\begin{equation}
\begin{aligned}
\sum_{2^{k-2}\leq i<j\leq2^{k-1}-1}\mathbb{E}_{G', N, \beta} \sigma_i \sigma_j=& \sum_{2^{k-2}\leq i<j\leq2^{k-1}-1}(\frac{1-\exp(-2\beta)}{1+\exp(-2\beta)})^{d(i,j)}\\
=&\sum_{2^{k-2}\leq i \leq2^{k-1}-1}\sum_{l=1}^{k-2}2^{l-1}(\frac{1-\exp(-2\beta)}{1+\exp(-2\beta)})^{2l}\\
=&2^{k-3}\sum_{l=1}^{k-2}(2\cdot(\frac{1-\exp(-2\beta)}{1+\exp(-2\beta)})^2)^l.
\end{aligned}
\end{equation}
Using equations~(\ref{Tal}), (\ref{Tot}), and (\ref{expect}), along with the computation above, we readily obtain the value of \( Z_v''(0) \). We conclude that
\begin{equation}
\begin{aligned}
&\frac{1}{2Z^*_{G',N,\beta}(0)\mathcal{U}(0)^{2(2^{k-2}-1)}}\cdot Z_v''(0)\\
=&2^{k-2}(\mathcal{U}(0)\mathcal{U}''(0)+\mathcal{U}'(0)^2)+4\mathcal{U}'(0)^22^{k-3}\sum_{l=1}^{k-2}(2\cdot(\frac{1-\exp(-2\beta)}{1+\exp(-2\beta)})^2)^l.\\
=&2^{k-2}(\mathcal{U}(0)\mathcal{U}''(0)+\mathcal{U}'(0)^2)+2^{k-1}\mathcal{U}'(0)^2\sum_{l=1}^{k-2}(2\cdot(\frac{1-\exp(-2\beta)}{1+\exp(-2\beta)})^2)^l.
\end{aligned}
\end{equation}
Fix $\beta_0 > 0$ sufficiently large such that
\[
2 \left( \frac{1 - \exp(-2\beta_0)}{1 + \exp(-2\beta_0)} \right)^2 > 1.
\]
Then, since $\mathcal{U}'(0)\neq 0$, as the depth $k \to \infty$, the following divergence holds:
\[
\mathcal{U}'(0)^2 \sum_{l=1}^{k-2} \left( 2 \left( \frac{1 - \exp(-2\beta)}{1 + \exp(-2\beta)} \right)^2 \right)^l \to +\infty \quad \text{for all } \beta > \beta_0.
\]
Moreover, $Z_v''(0) > 0$ under these conditions. This implies that, for any $\beta > \beta_0$ and sufficiently large $k$, Gaussian domination fails for the perfect binary tree of depth $k$.

\end{proof}

\section{From bounded Green's function to symmetric breaking}
\label{sec7}
In this section, we present another counterexample to prove Theorem~\ref{ppath}.
\begin{thm}
Let $N = 1$, given $M>0$ and $\epsilon>0$, we do not always have a universal $\beta_c=\beta_c(N, \epsilon, M)$, such that for any  graph $G$, $\beta> \beta_c$ and $x,y \in G$ where $\frac{u_{xy}(x)}{d(x)}<M$, we have $\mathbb{E}_{\mu_{G,N,\beta}} \sigma_x \cdot \sigma_y > 1-\epsilon$.
\end{thm}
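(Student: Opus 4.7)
The plan is, for any proposed $\beta_c$, to exhibit a finite graph $G$ and vertices $x,y$ for which $u_{xy}(x)/d(x)\leq M$ while $\mathbb{E}_{\mu_{G,1,\beta}}\sigma_x\sigma_y<1-\epsilon$ at some $\beta>\beta_c$. I would take $G=G_{k,L}$ to be the ``theta graph'' in which $x$ and $y$ are joined by $k$ internally vertex-disjoint paths, each consisting of $L$ edges. Since $d(x)=k$ and, by the series/parallel rules for unit-conductance electrical networks, the effective resistance between $x$ and $y$ equals $L/k$, the classical identity $u_{xy}(x)=d(x)\cdot R_{\mathrm{eff}}(x,y)$ yields $u_{xy}(x)/d(x)=L/k$. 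So setting $k=\lceil L/M\rceil$ enforces the renormalized Green's function bound for every $L$.

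The next step is to compute $\mathbb{E}_{\mu_{G_{k,L},1,\beta}}\sigma_x\sigma_y$ exactly via the high-temperature (even-subgraph) expansion. With $t:=\tanh\beta$ one has
\begin{equation*}
\mathbb{E}_{\mu_{G,1,\beta}}\sigma_x\sigma_y=\frac{\sum_{S\subseteq E\,:\,\partial S=\{x,y\}} t^{|S|}}{\sum_{S\subseteq E\,:\,\partial S=\emptyset} t^{|S|}}.
\end{equation*}
The key observation is that every internal vertex of $G_{k,L}$ has degree $2$, so the parity constraint forces the restriction of any even subgraph $S$ to each of the $k$ paths to be either all $L$ edges or none. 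Thus $\partial S=\{x,y\}$ corresponds to selecting an odd number of ``full'' paths among the $k$, and $\partial S=\emptyset$ to an even number. A binomial computation then gives the closed form
\begin{equation*}
\mathbb{E}_{\mu_{G_{k,L},1,\beta}}\sigma_x\sigma_y=\frac{(1+t^L)^k-(1-t^L)^k}{(1+t^L)^k+(1-t^L)^k}=\tanh\bigl(k\operatorname{arctanh}(t^L)\bigr).
\end{equation*}

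Finally, given any candidate $\beta_c$, fix $\beta>\beta_c$ so that $t=\tanh\beta\in(0,1)$, take $k=\lceil L/M\rceil$, and let $L\to\infty$. Since $\operatorname{arctanh}(s)=s+O(s^3)$ as $s\to 0^+$ and $L\cdot t^L\to 0$ for any $t<1$, we have $k\operatorname{arctanh}(t^L)\leq (L/M+1)\cdot t^L(1+o(1))\to 0$, whence $\mathbb{E}_{\mu_{G_{k,L},1,\beta}}\sigma_x\sigma_y\to 0$. In particular, for $L$ large enough the two-point function falls below $1-\epsilon$, contradicting the existence of a universal $\beta_c=\beta_c(N,\epsilon,M)$.

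The only nontrivial step is the even-subgraph counting in the middle paragraph, and it collapses to a binomial identity once one notes that the degree-$2$ constraint at internal vertices forces each path to be either entirely in or entirely out of any even subgraph. The electrical identification of $u_{xy}(x)/d(x)$ with $R_{\mathrm{eff}}(x,y)$ is classical, and the final asymptotic is just exponential decay of $t^L$ dominating the linear growth of $k$.
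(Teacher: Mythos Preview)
Your argument is correct and follows essentially the same route as the paper: both use the ``theta'' graph of $k$ parallel length-$L$ paths between $x$ and $y$, compute the two-point function in the closed form $\dfrac{(1+t^L)^k-(1-t^L)^k}{(1+t^L)^k+(1-t^L)^k}$ with $t=\tanh\beta$, and send the graph parameters to infinity so that this ratio tends to $0$ while the renormalized Green's function stays bounded. The only cosmetic differences are that the paper fixes $M=3$, $\epsilon=\tfrac12$ and takes $k=L$ (computing $u_{xy}(x)$ via gambler's ruin and the spin sum directly), whereas you treat general $M$ via $k=\lceil L/M\rceil$ and invoke the effective-resistance identity and the even-subgraph expansion---but the graph, the formula, and the limiting mechanism are the same.
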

\begin{figure}[htbp]
    \centering
    
    \includegraphics[scale=1]{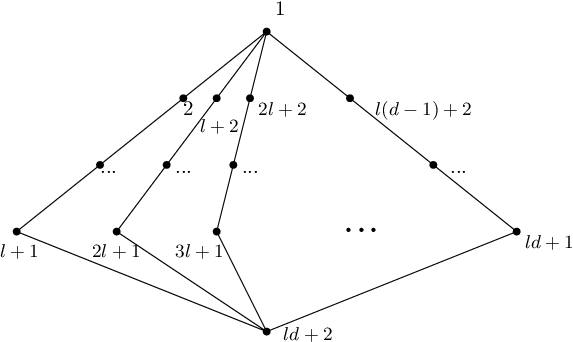}
    \caption{Parallel Paths}
    \label{fig:example}
\end{figure}
\begin{proof}
Take $M=3$, $\epsilon=\frac{1}{2}$. We construct the graph above for fixed $l=d\in \mathbb{N}^*$ so that $|V|=ld+2$. Take $x=1$ and $y=ld+2$, we first check that $\frac{u_{xy}(x)}{d(x)}$ is bounded by $M$. Note that a random walk starting from $1$ hits $ld+2$ before it goes back to $1$ is a typical gambler's ruin problem, and its probability is $\frac{1}{l+1}$. So, as the expectation of a geometric distribution, $u_{xy}(x)=\mathbb{E}_x\sum\limits_{k=1}^{H_y-1}1_{X_k=x}=\sum_{i=0}^\infty(\frac{l}{l+1})^i=l+1$. Thus $\frac{u_{xy}(x)}{d(x)}=\frac{l+1}{l}<3=M$.

On the other hand, for any $\beta>0$, note that
\begin{equation}
\begin{aligned}
\sum_{\sigma_1=\sigma_{ld+2}=1}\exp(-\frac{\beta}{2}\sum_{\{i, j\}\in E}\lVert\sigma_i-\sigma_j \rVert^2)&= (\sum_{1\leq 2k \leq l+1}\binom{l+1}{2k}\exp(-4k\beta))^d\\
&=(\frac{(1+\exp(-2\beta))^{l+1}+(1-\exp(-2\beta))^{l+1}}{2})^d.
\end{aligned}
\end{equation}
In the first equality, $2k$ is the number of edges with different values at its endpoints in one of the paths from $1$ to $ld+2$, $k\in \mathbb{N}$. Thus,
\begin{equation}
\sum_{\sigma_1 = \sigma_{ld+2}}\exp(-\frac{\beta}{2}\sum_{\{i, j\}\in E}\lVert\sigma_i-\sigma_j \rVert^2)=2(\frac{(1+\exp(-2\beta))^{l+1}+(1-\exp(-2\beta))^{l+1}}{2})^d.
\end{equation}
For the same reason,
\begin{equation}
\sum_{\sigma_1\neq \sigma_{ld+2}}\exp(-\frac{\beta}{2}\sum_{\{i, j\}\in E}\lVert\sigma_i-\sigma_j \rVert^2)=2(\frac{(1+\exp(-2\beta))^{l+1}-(1-\exp(-2\beta))^{l+1}}{2})^d.
\end{equation}
For any $\beta>0$, let $r:=\frac{1-\exp(-2\beta)}{1+\exp(-2\beta)}$, so that $0<r<1$, we will have that
\begin{equation}
\begin{aligned}
\mathbb{E}_{\mu_{G,N,\beta}} \sigma_x \cdot \sigma_y=&\frac{((1+\exp(-2\beta))^{l+1}+(1-\exp(-2\beta))^{l+1})^d-((1+\exp(-2\beta))^{l+1}-(1-\exp(-2\beta))^{l+1})^d}{((1+\exp(-2\beta))^{l+1}+(1-\exp(-2\beta))^{l+1})^d+((1+\exp(-2\beta))^{l+1}-(1-\exp(-2\beta))^{l+1})^d}\\
=&\frac{(1+r^{l+1})^d-(1-r^{l+1})^d}{(1+r^{l+1})^d+(1-r^{l+1})^d}.
\end{aligned}
\end{equation}
When $l=d \to \infty$, we have that $\log(1+r^{l+1})^d=d\log(1+r^{l+1})=dr^{l+1}+o(r^{l+1})\to 0$, thus $(1+r^{l+1})^d\to 1$. For the same reason $(1-r^{l+1})^d\to 1$, which means that
\begin{equation}
\mathbb{E}_{\mu_{G,N,\beta}} \sigma_x \cdot \sigma_y\to 0 < 1-\epsilon.
\end{equation}
This completes the proof.
\end{proof}

\end{document}